\newcommand{\wt}{\widetilde}
\newcommand{\R}{{\mathbb  R}}  \numberwithin{equation}{section} \newtheorem{thm}{\bf
Theorem}[section]
 \newtheorem{prop}[thm]{\bf Proposition} 
  \theoremstyle{remark}
\begin{document}

\title{\huge\bf Steepest descent algorithm on orthogonal Stiefel manifolds} 
 \author{Petre Birtea, Ioan Ca\c su and Dan Com\u{a}nescu \\ {\small Department of Mathematics, West University of Timi\c soara}\\ {\small Bd. V.
P\^ arvan,
No 4, 300223 Timi\c soara, Rom\^ania}\\ {\small petre.birtea@e-uvt.ro, ioan.casu@e-uvt.ro, dan.comanescu@e-uvt.ro}\\ }
\date{ }

 \maketitle
 
 \begin{abstract}
 Considering orthogonal Stiefel manifolds as constraint manifolds, we give an explicit description of a set of local coordinates that also generate a basis for the tangent space in any point of the orthogonal Stiefel manifolds. We show how this construction depends on the choice of a  submatrix of full rank.
 Embedding a gradient vector field on an orthogonal  Stiefel manifold in the ambient space, we give explicit necessary and sufficient conditions for a critical point of a cost function defined on such manifolds. We explicitly describe the steepest descent algorithm on the orthogonal Stiefel manifold using the ambient coordinates and not the local coordinates of the manifold. We point out the dependence of the recurrence sequence that defines the algorithm on the choice of a full rank submatrix. We illustrate the algorithm in the case of Brockett cost functions.
 \end{abstract}

 {\bf MSC}: 53Bxx, 65Kxx, 90Cxx

{\bf Keywords}: Steepest descent algorithm, Optimization, Constraint manifold, Orthogonal Stiefel manifold, Brockett cost function.

\section{Introduction}

In Section 2 we construct an atlas for the orthogonal Stiefel manifolds
$St_p^n=\{U\in \mathcal{M}_{n\times p}(\R) \,|\,U^TU=\mathbb{I}_p\}$ following an idea from \cite{fraikin}. The local charts that we introduce crucially depend on the choice of a full rank submatrix of the elements in the orthogonal Stiefel manifolds. More precisely, for $U\in St^n_p$,  if $I_p$ is the set of row indexes that form a full rank submatrix of the matrix $U$, then we define the vector subspace $W_{I_p}=\left\{\Omega=\left[\omega_{ij}\right]\in \text{Skew}_{n\times n}(\R)\,\left|\, \omega_{ij}=0\,\,\text{for all}\,\,i\notin I_p\,\hbox{and}\, j\notin I_p \right.\right\}$. The local charts are defined by $\varphi_U:W_{I_p}\rightarrow St_p^n,\,\,\, \varphi_U(\Omega):=\mathcal{C}(\Omega)U$,
 where $\mathcal{C}(\Omega)=(\mathbb{I}_n+\Omega)(\mathbb{I}_n-\Omega)^{-1}$ is the Cayley transform.  These local charts provide us with a basis for the tangent spaces to the orthogonal Stiefel manifolds.

In Section 3 we present necessary and sufficient conditions for a critical point of a cost function defined on an orthogonal Stiefel manifold using the embedded vector field method \cite{Birtea-Comanescu-Hessian}, \cite{birtea-comanescu}, and \cite{birtea-comanescu-5-electron}. We describe necessary and sufficient conditions for critical points in the case of Procrustes and Penrose regression cost functions, sums of heterogeneous quadratic forms, and Brockett cost functions. We also discuss our findings in comparison with existing results in the literature \cite{wen}, \cite{chu}, and \cite{bolla}.

In the last section we give an explicit description of the steepest descent algorithm taking into account the specificity of the orthogonal Stiefel manifold. On a general Riemannian manifold $(S,{\bf g}_S)$ the iterative scheme of steepest descent algorithm is given by
\begin{equation*}
x_{k+1}={\mathcal{R}}_{x_k}(-\lambda_k\nabla_{{\bf g}_{_S}}\wt{G}(x_k)),
\end{equation*}
where $\widetilde{G}:S\rightarrow \R$ is the cost function that we want to minimize, $\mathcal{R}:TS\rightarrow S$ is a smooth retraction and $\lambda_k\in \R$ is a chosen step length. For the case of orthogonal Stiefel manifolds, we write the vector $\nabla_{{\bf g}_{_S}}\wt{G}(x_k)$ as a vector in the ambient space $T_{x_k}M$ using the embedded gradient vector field $\partial G(x_k)$ (see \cite{birtea-comanescu} and \cite{birtea-comanescu-5-electron}), i.e., $\nabla_{{\bf g}_{_S}}\wt{G}(x_k)=\partial G(x_k)$. The explicit description of the vector $\partial G(U_k)$ on an orthogonal Stiefel manifold depends on the chosen basis for $T_{U_k}St^n_p$, which in turn depends on the chosen full rank submatrix of $U_k$. In order to write the vector $-\lambda_k\partial{G}(U_k)$ as a tangent vector in $T_{U_k}St^n_p$ we have to solve the matrix equation $-\lambda_k\partial G(U_k)=\Omega_kU_k$ for the unknown skew-symmetric matrix $\Omega_k\in W_{I_p(U_k)}$. Once we have solved for $\Omega_k$, we construct the next term of the iterative sequence as $U_{k+1}=\left(\mathbb{I}_n+\frac{1}{2}\Omega_k\right)\left(\mathbb{I}_n-\frac{1}{2}\Omega_k\right)^{-1}U_k.$
Moreover, using an appropriate permutation matrix for each step of the algorithm we   give an explicit elegant solution of the matrix equation  $-\lambda_k\partial G(U_k)=\Omega_kU_k$, which makes the steepest descent algorithm more implementable. 
We exemplify the form of the steepest descent algorithm that we have constructed on orthogonal Stiefel manifolds for the case of two Brockett cost functions. 

Another method to construct numerical algorithms in the presence of orthogonal constraints of the Stiefel manifolds is presented in \cite{kanamori} and the authors use the so called Alternating Direction Method of Multipliers (ADMM), see \cite{boyd} and \cite{zhang} for a general description. ADMM is a variant of the Augmented Lagrangian Method of Multipliers introduced in \cite{glowinski-marrocco}, see also \cite{glowinski} for a historical presentation of the method. A deep convergence result for the extension of ADMM to multi-block convex minimization problems is proved in \cite{chen}.

\section{Local charts on the orthogonal Stiefel manifolds}

In this section we will construct a local chart around every point $U\in St_p^n$ and a basis for the tangent space $T_USt_p^n$. We will follow the idea presented in \cite{fraikin}, where the authors have constructed a local chart around points closed to $\left[ \mathbb{I}_p\,\,\mathbb{O}_{(n-p)\times p}\right]^T\in St_p^n$. This corresponds to the particular situation when the full rank submatrix of the point $U\in St_p^n$ is formed with the first $p$ rows. For a general $U\in St_p^n$ a modification of the construction presented in \cite{fraikin} is necessary.

Let $U\in St_p^n$ and $1\leq i_1<...<i_p\leq n$ be the indexes of the rows that form a full rank submatrix $\bar{U}$ of $U$. We denote $I_p=\{i_1,...,i_p\}$. Let $\text{Skew}_{n\times n}(\R)$ be the $\frac{n(n-1)}{2}$-dimensional vectorial space of the real skew-symmetric $n\times n$ matrices.
We introduce the following $np-\frac{p(p+1)}{2}$-dimensional vectorial subspace of $\text{Skew}_{n\times n}(\R)$:
$$W_{I_p}:=\left\{\Omega\in \text{Skew}_{n\times n}(\R)\,\left|\,\Omega=  \sum_
{\substack{
i<j;\,
   i,j\in I_p 
  }}
\omega_{ij}\,({\bf e}_i\otimes {\bf e}_j-{\bf e}_j\otimes {\bf e}_i)
+\sum_
{\substack{
   i\in I_p;\,
j\notin I_p
  }}
\omega_{ij}\,({\bf e}_i\otimes {\bf e}_j-{\bf e}_j\otimes {\bf e}_i)\right.\right\},$$
where the vectors ${\bf e}_1$, ...   ,${\bf e}_n$ form the canonical basis in the Euclidean space $\R^n$. The $n\times n$ matrix ${\bf e}_i\otimes {\bf e}_j$ has $1$ on the $i$-th row and $j$-th column and $0$ on all remaining positions. 
An equivalent description of the vectorial subspace $W_{I_p}$ is given by 
$$W_{I_p}=\left\{\Omega=\left[\omega_{ij}\right]\in \text{Skew}_{n\times n}(\R)\,\left|\, \omega_{ij}=0\,\,\text{for all}\,\,i\notin I_p\,\hbox{and}\, j\notin I_p \right.\right\}.$$
Around the point $U\in St_p^n$ chosen above we construct the local chart 
\begin{equation}\label{charts}
\varphi_U:W_{I_p}\rightarrow St_p^n,\,\,\, \varphi_U(\Omega):=\mathcal{C}(\Omega)U,
\end{equation}
 where $\mathcal{C}(\Omega)=(\mathbb{I}_n+\Omega)(\mathbb{I}_n-\Omega)^{-1}$ is the Cayley transform.
We notice that $\varphi_U$ is a smooth map with $\varphi_U({\bf 0})=U$. In order to prove that $\varphi_U$ is a local chart it is sufficient to prove that $\varphi_U$ is locally injective around ${\bf 0}\in W_{I_p}$, which in turn is implied by injectivity of the linear map $\displaystyle d\varphi_U({\bf 0})$. The later condition is equivalent with the vectors $\displaystyle\frac{\partial \varphi_U}{\partial \omega_{ij}}({\bf 0})$ being linearly independent.

In what follows we use the notation:
$$\Lambda_{ij}:={\bf e}_i\otimes {\bf e}_j - {\bf e}_j\otimes {\bf e}_i\in \mathcal{M}_{n\times n}(\R),$$
for any $1\leq i,j\leq n$, $i\ne j$.

An easy computation shows that (see \cite{petersen-pedersen}\footnote{We have used the following formula for the derivative of the inverse of a matrix: $\frac{\partial A^{-1}}{\partial x}=-A^{-1}\frac{\partial A}{\partial x}A^{-1}$})
\begin{align*}
\frac{\partial\varphi_U}{\partial \omega_{ij}}(\Omega)=&\left(\Lambda_{ij}(\mathbb{I}_n-\Omega)^{-1}+(\mathbb{I}_n+\Omega)(\mathbb{I}_n-\Omega)^{-1}\Lambda_{ij}(\mathbb{I}_n-\Omega)^{-1}\right)U\\
=&\left(\mathbb{I}_n+(\mathbb{I}_n+\Omega)(\mathbb{I}_n-\Omega)^{-1}\right)\Lambda_{ij}(\mathbb{I}_n-\Omega)^{-1}U\\
=&\left((\mathbb{I}_n-\Omega)(\mathbb{I}_n-\Omega)^{-1}+(\mathbb{I}_n+\Omega)(\mathbb{I}_n-\Omega)^{-1}\right)\Lambda_{ij}(\mathbb{I}_n-\Omega)^{-1}U\\
=& 2(\mathbb{I}_n-\Omega)^{-1}\Lambda_{ij}(\mathbb{I}_n-\Omega)^{-1}U.
\end{align*}
Consequently, we have
$$\frac{\partial\varphi_U}{\partial \omega_{ij}}({\bf 0})=2\Lambda_{ij}U.$$
For proving the linear independence of the vectors $\displaystyle\frac{\partial \varphi_U}{\partial \omega_{ij}}({\bf 0})$, we consider the equation
$$\sum_{\substack{i<j;\,i,j\in I_p}}\alpha_{ij}\frac{\partial \varphi_U}{\partial \omega_{ij}}({\bf 0})+\sum_{\substack{r\in I_p;\,s\notin I_p}}\beta_{rs}\frac{\partial \varphi_U}{\partial \omega_{rs}}({\bf 0})=\mathbb{O}_{n\times p},$$
which is equivalent with\footnote{The vectors ${\bf f}_1$, ...   ,${\bf f}_p$ form the canonical basis in the Euclidean space $\R^p$. We use the rule for matrix multiplication 
$\left({\bf u}\otimes {\bf v}_{\oplus}\right)\cdot \left({\bf v}_{\odot}\otimes {\bf w}\right)=\delta_{\oplus,\odot}{\bf u}\otimes {\bf w},$
where ${\bf v}_{\oplus}$ and ${\bf v}_{\odot}$ belong to the same vectorial space.}
\begin{align*}
\mathbb{O}_{n\times p}=&\sum_{\substack{i<j;\,i,j\in I_p}}\alpha_{ij}\Lambda_{ij}U+\sum_{\substack{r\in I_p;\,s\notin I_p}}\beta_{rs}\Lambda_{rs}U\\
=&\sum_{\substack{i<j;\,i,j\in I_p}}\alpha_{ij}\Lambda_{ij}\left( \sum_
{\substack{
   k\notin I_p \\
b\in \{1,...,p\}
  }}
u_{kb}\,{\bf e}_k\otimes {\bf f}_b+
\sum_
{\substack{
   q\in I_p \\
a\in \{1,...,p\}
  }}
u_{qa}\,{\bf e}_q\otimes {\bf f}_a\right)+\\
+&\sum_{\substack{r\in I_p;\,s\notin I_p}}\beta_{rs}\Lambda_{rs}\left( \sum_
{\substack{
   k\notin I_p \\
b\in \{1,...,p\}
  }}
u_{kb}\,{\bf e}_k\otimes {\bf f}_b+
\sum_
{\substack{
   q\in I_p \\
a\in \{1,...,p\}
  }}
u_{qa}\,{\bf e}_q\otimes {\bf f}_a\right)\\
=&\sum_{\substack{i<j;\,i,j\in I_p}}\alpha_{ij}\left({\bf e}_i\otimes {\bf e}_j - {\bf e}_j\otimes {\bf e}_i\right)\left( \sum_
{\substack{
   k\notin I_p \\
b\in \{1,...,p\}
  }}
u_{kb}\,{\bf e}_k\otimes {\bf f}_b+
\sum_
{\substack{
   q\in I_p \\
a\in \{1,...,p\}
  }}
u_{qa}\,{\bf e}_q\otimes {\bf f}_a\right)+\\
+&\sum_{\substack{r\in I_p;\,s\notin I_p}}\beta_{rs}\left({\bf e}_r\otimes {\bf e}_s - {\bf e}_s\otimes {\bf e}_r\right)\left( \sum_
{\substack{
   k\notin I_p \\
b\in \{1,...,p\}
  }}
u_{kb}\,{\bf e}_k\otimes {\bf f}_b+
\sum_
{\substack{
   q\in I_p \\
a\in \{1,...,p\}
  }}
u_{qa}\,{\bf e}_q\otimes {\bf f}_a\right)\\
=&\sum_{\substack{i<j;\,i,j\in I_p\\k\notin I_p;\,b\in \{1,\dots,p\}}}\alpha_{ij}u_{kb}\left(\delta_{jk}{\bf e}_i\otimes {\bf f}_b-\delta_{ik}{\bf e}_j\otimes {\bf f}_b\right)+\sum_{\substack{i<j;\,i,j\in I_p\\q\in I_p;\,a\in \{1,\dots,p\}}}\alpha_{ij}u_{qa}\left(\delta_{jq}{\bf e}_i\otimes {\bf f}_a-\delta_{iq}{\bf e}_j\otimes {\bf f}_a\right)+\\
+&\sum_{\substack{r\in I_p;\,s\notin I_p\\k\notin I_p;\,b\in \{1,\dots,p\}}}\beta_{rs}u_{kb}\left(\delta_{sk}{\bf e}_r\otimes {\bf f}_b-\delta_{rk}{\bf e}_s\otimes {\bf f}_b\right)+\sum_{\substack{r\in I_p;\,s\notin I_p\\q\in I_p;\,a\in \{1,\dots,p\}}}\beta_{rs}u_{qa}\left(\delta_{sq}{\bf e}_r\otimes {\bf f}_a-\delta_{rq}{\bf e}_s\otimes {\bf f}_a\right)\\
=&\sum_{\substack{i<j;\,i,j\in I_p\\a\in \{1,\dots,p\}}}\alpha_{ij}\left(u_{ja}{\bf e}_i\otimes {\bf f}_a-u_{ia}{\bf e}_j\otimes {\bf f}_a\right)+\sum_{\substack{r\in I_p;\,s\notin I_p\\a\in \{1,\dots,p\}}}\beta_{rs}\left(u_{sa}{\bf e}_r\otimes {\bf f}_a-u_{ra}{\bf e}_s\otimes {\bf f}_a\right).
\end{align*}
Decomposing the above matrix equality on the subspaces $\text{Span} \{{\bf e}_s\otimes {\bf f}_a\,|\,s\notin I_p\}$ and  $\text{Span} \{{\bf e}_l\otimes {\bf f}_a\,|\,l\in I_p\}$, we have
\begin{gather}
\sum_{\substack{i<j;\,i,j\in I_p\\a\in \{1,\dots,p\}}}\alpha_{ij}\left(u_{ja}{\bf e}_i\otimes {\bf f}_a-u_{ia}{\bf e}_j\otimes {\bf f}_a\right)+\sum_{\substack{r\in I_p;\,s\notin I_p\\a\in \{1,\dots,p\}}}\beta_{rs}u_{sa}{\bf e}_r\otimes {\bf f}_a=\mathbb{O}_{n\times p}\,;\label{alfabeta}\\
\sum_{\substack{s\notin I_p\\a\in \{1,\dots,p\}}}\left(\sum_{r\in I_p}\beta_{rs}u_{ra}\right){\bf e}_s\otimes {\bf f}_a=\mathbb{O}_{n\times p}.\label{beta}
\end{gather}

Considering now the matrix $[\boldsymbol{\beta}]\in \mathcal{M}_{p\times (n-p)}(\R)$, $[\boldsymbol{\beta}]:=\displaystyle\sum\limits_{a\in\{1,\dots,p\};\,s\notin I_p}\beta_{\tau^{-1}(a)s}{\bf f}_a\otimes {\bf h}_{\sigma(s)}$\footnote{We relabel the set $\{1,...,n\}\backslash I_p$ using  the unique strictly increasing function $\sigma: \{1,...,n\}\backslash I_p\rightarrow \{1,...,n-p\}$. Analogously, we relabel the set $I_p$ using the unique strictly increasing function $\tau: I_p\rightarrow \{1,...,p\}$. The vectors ${\bf h}_1,...,{\bf h}_{n-p}$ form the canonical basis of $\R^{n-p}$.
}, we can rewrite the equality \eqref{beta} in a condensed matrix form
$$[\boldsymbol{\beta}]^T\bar{U}=\mathbb{O}_{(n-p)\times p}.$$
Indeed, we have
\begin{align*}
[\boldsymbol{\beta}]^T\bar{U}=&\left(\sum_{\substack{s\notin I_p\\b\in \{1,\dots,p\}}}\beta_{\tau^{-1}(b)s} {\bf h}_{\sigma(s)}\otimes {\bf f}_b\right)\left(\sum\limits_
{\substack{
		r\in I_p \\
		a\in \{1,...,p\}
	}}
	u_{ra}\,{\bf f}_{\tau(r)}\otimes {\bf f}_a\right)\\
	=&\sum_{\substack{r\in I_p;\,s\notin I_p\\a,b\in \{1,\dots,p\}}}\beta_{\tau^{-1}(b)s}u_{ra}\delta_{b\tau(r)}{\bf h}_{\sigma(s)}\otimes {\bf f}_a\\
	=& \sum_{\substack{r\in I_p;\,s\notin I_p\\a\in \{1,...,p\}}}\beta_{rs}u_{ra}{\bf h}_{\sigma(s)}\otimes {\bf f}_a\\
	=&\sum_{\substack{s\notin I_p\\a\in \{1,...,p\}}}\left(\sum_{r\in I_p}\beta_{rs}u_{ra}\right){\bf h}_{\sigma(s)}\otimes {\bf f}_a\\
		=&\mathbb{O}_{(n-p)\times p}.
\end{align*}
Since the matrix $\bar{U}$ is invertible, we obtain that $[\boldsymbol{\beta}]=\mathbb{O}_{p\times (n-p)}$, which implies that
$\beta_{rs}=0$, for all $r\in I_p$ and all $s\notin I_p$.

Substituting these last equalities in \eqref{alfabeta}, it simplifies to
\begin{equation}\label{alfa}
\mathbb{O}_{n\times p}=\sum_{\substack{i<j;\,i,j\in I_p\\a\in \{1,\dots,p\}}}\alpha_{ij}\left(u_{ja}{\bf e}_i\otimes {\bf f}_a-u_{ia}{\bf e}_j\otimes {\bf f}_a\right).
\end{equation}
We introduce now the matrix $[\boldsymbol{\alpha}]\in \mathcal{M}_{p\times p}(\R)$,
$[\boldsymbol{\alpha}]:=\sum\limits_{i<j;\,i,j\in I_p}\alpha_{ij}\left({\bf f}_{\tau(i)}\otimes {\bf f}_{\tau(j)}-{\bf f}_{\tau(j)}\otimes {\bf f}_{\tau(i)}\right)$.
We have the following computations:
\begin{align*}
[\boldsymbol{\alpha}]\bar{U}=&\sum_{i<j;\,i,j\in I_p}\alpha_{ij}\left({\bf f}_{\tau(i)}\otimes {\bf f}_{\tau(j)}-{\bf f}_{\tau(j)}\otimes {\bf f}_{\tau(i)}\right)\left(\sum_{\substack{k\in I_p\\a\in \{1,\dots,p\}}}u_{ka}{\bf f}_{\tau(k)}\otimes {\bf f}_{a}\right)\\
=&\sum_{\substack{i<j;\,i,j\in I_p\\k\in I_p;\,a\in \{1,\dots,p\}}}\alpha_{ij}u_{ka}\left(\delta_{jk}{\bf f}_{\tau(i)}\otimes {\bf f}_{a}-\delta_{ik}{\bf f}_{\tau(j)}\otimes {\bf f}_{a}\right)\\
=&\sum_{\substack{i<j;\,i,j\in I_p \\a\in \{1,\dots,p\} }}\alpha_{ij}\left(u_{ja}{\bf f}_{\tau(i)}\otimes {\bf f}_{a}-u_{ia}{\bf f}_{\tau(j)}\otimes {\bf f}_{a}\right).
\end{align*}
By selecting from the matrix equality \eqref{alfa} the rows with indexes in $I_p$, we obtain that $[\boldsymbol{\alpha}]\bar{U}=\mathbb{O}_{p\times p}$, and since the matrix $\bar{U}$ is invertible it follows that $[\boldsymbol{\alpha}]=\mathbb{O}_{p\times p}$ and therefore $\alpha_{ij}=0$ for all $i,j\in I_p$ with $i<j$.

Thus, we have proved the linear independence of the vectors $\displaystyle\frac{\partial \varphi_U}{\partial \omega_{ij}}({\bf 0})$, that also form a basis for the tangent space $T_USt_p^n$.

\begin{prop}\label{Baza-2}
Let $U\in St_p^n$ and $1\leq i_1<...<i_p\leq n$ be the indexes of the rows that form a full rank submatrix $\bar{U}$ of $U$. Then the vectors: 
\begin{gather*}
\Gamma_{i'j'}(U):=\Lambda_{i'j'}U,\,\,i',j'\in I_p,\,\,i'<j', \\
\Gamma_{i''j''}(U):=\Lambda_{i''j''}U,\,\,\,i''\in I_p,\,j''\notin I_p,
\end{gather*}
form a basis for the tangent space $T_USt_p^n$.
\end{prop}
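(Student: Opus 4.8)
The statement is a direct consequence of the computations carried out above, together with an elementary dimension count. First I would record that $\varphi_U$ genuinely takes values in $St_p^n$: for skew-symmetric $\Omega$ the matrix $\mathbb{I}_n-\Omega$ is invertible (the eigenvalues of $\Omega$ are purely imaginary, so $1$ is not among them) and the Cayley transform satisfies $\mathcal{C}(\Omega)^T\mathcal{C}(\Omega)=\mathbb{I}_n$, whence $(\mathcal{C}(\Omega)U)^T\mathcal{C}(\Omega)U=U^TU=\mathbb{I}_p$. Consequently, by the chain rule, the linear map $d\varphi_U(\mathbf{0})$ takes values in $T_USt_p^n$, and it sends the coordinate basis vector associated with $\omega_{ij}$ to $\dfrac{\partial\varphi_U}{\partial\omega_{ij}}(\mathbf{0})=2\Lambda_{ij}U$, where $(i,j)$ runs over the two families $\{\,i<j;\ i,j\in I_p\,\}$ and $\{\,i\in I_p;\ j\notin I_p\,\}$.

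Next I would invoke the linear independence of the family $\{2\Lambda_{ij}U\}$ established in the preceding paragraphs (the argument that reduces the dependence relation, via the matrices $[\boldsymbol{\beta}]$ and $[\boldsymbol{\alpha}]$ and the invertibility of $\bar{U}$, to $\alpha_{ij}=0$ and $\beta_{rs}=0$). This shows $d\varphi_U(\mathbf{0})$ is injective. Then I would compare dimensions: the number of pairs in the first family is $\frac{p(p-1)}{2}$ and in the second is $p(n-p)$, so $\dim W_{I_p}=\frac{p(p-1)}{2}+p(n-p)=np-\frac{p(p+1)}{2}$, which equals $\dim St_p^n$ (the classical dimension of the orthogonal Stiefel manifold, seen e.g. as the regular level set $\{U^TU=\mathbb{I}_p\}$ of the submersion into the space of symmetric $p\times p$ matrices). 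An injective linear map between vector spaces of equal finite dimension is an isomorphism; hence $d\varphi_U(\mathbf{0})\colon W_{I_p}\to T_USt_p^n$ is an isomorphism, which also confirms that $\varphi_U$ is a local diffeomorphism at $\mathbf{0}$, i.e. a local chart around $U$.

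Finally I would note that, since $d\varphi_U(\mathbf{0})$ is an isomorphism onto $T_USt_p^n$, it carries the canonical basis of $W_{I_p}$ (the matrices $\Lambda_{ij}$ with the same two index ranges) to a basis of $T_USt_p^n$, namely $\{2\Lambda_{ij}U\}$; rescaling each vector by $\tfrac12$ preserves the basis property, so the vectors $\Gamma_{i'j'}(U)=\Lambda_{i'j'}U$ with $i',j'\in I_p$, $i'<j'$, together with $\Gamma_{i''j''}(U)=\Lambda_{i''j''}U$ with $i''\in I_p$, $j''\notin I_p$, form a basis of $T_USt_p^n$. I do not anticipate a genuine obstacle here beyond the bookkeeping already performed above; the single point that deserves an explicit line is the dimension equality $\dim W_{I_p}=\dim St_p^n$, since it is exactly this that upgrades the injectivity of $d\varphi_U(\mathbf{0})$ to surjectivity and thereby turns the linearly independent family into a basis.
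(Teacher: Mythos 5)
Your proposal is correct and follows essentially the same route as the paper: it relies on the very same computation $\frac{\partial\varphi_U}{\partial\omega_{ij}}(\mathbf{0})=2\Lambda_{ij}U$ and the linear-independence argument via $[\boldsymbol{\alpha}]$, $[\boldsymbol{\beta}]$ and the invertibility of $\bar{U}$. Your only additions are to spell out what the paper leaves implicit, namely that the Cayley transform keeps $\varphi_U$ inside $St_p^n$ and that the count $\frac{p(p-1)}{2}+p(n-p)=np-\frac{p(p+1)}{2}=\dim St_p^n$ upgrades linear independence to a basis.
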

As a consequence, we have the following description for the tangent space to an orthogonal Stiefel manifold.
\begin{thm}\label{spatiutangent}
Let $U\in St^n_p$. Then
$$T_{U}St^n_p=\left\{\Omega U|~\Omega =\left[\omega_{ij}\right]\in \normalfont{\text{Skew}}_{n\times n}(\R),\normalfont{\text{where}}\,\, \omega_{ij}=0\,\,\normalfont{\text{for all}}\,\,i\notin I_p\,\normalfont{\text{and}}\, j\notin I_p\right\}.$$
\end{thm}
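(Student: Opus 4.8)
The plan is to read the claimed identity off Proposition \ref{Baza-2}, which already furnishes an explicit basis of $T_USt_p^n$ consisting of the matrices $\Lambda_{ij}U$ as $\{i,j\}$ ranges over the pairs with at least one index in $I_p$. By the second (equivalent) description of $W_{I_p}$ recalled in Section 2, the right-hand side of the theorem is exactly $\{\Omega U \mid \Omega\in W_{I_p}\}$, i.e. the image of the linear map $L_U\colon W_{I_p}\to\mathcal{M}_{n\times p}(\R)$, $L_U(\Omega):=\Omega U$. So it suffices to show $\mathrm{Im}\,L_U=T_USt_p^n$.

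First I would record a spanning set for $W_{I_p}$: every $\Omega\in W_{I_p}$ is $\sum\omega_{ij}\Lambda_{ij}$ over the pairs $i<j$ that are not both outside $I_p$, and conversely each such $\Lambda_{ij}$ lies in $W_{I_p}$. Splitting these pairs according to whether both indices lie in $I_p$ or exactly one does, and using $\Lambda_{ij}=-\Lambda_{ji}$ to put the $I_p$-index first in the mixed case, this spanning family is precisely $\{\Lambda_{i'j'}\mid i',j'\in I_p,\ i'<j'\}\cup\{\Lambda_{i''j''}\mid i''\in I_p,\ j''\notin I_p\}$.

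Applying $L_U$ to this spanning family produces exactly the vectors $\Gamma_{i'j'}(U)$ and $\Gamma_{i''j''}(U)$ of Proposition \ref{Baza-2}. Since $L_U$ is linear, $\mathrm{Im}\,L_U=\mathrm{Span}\{\Gamma_{i'j'}(U)\}+\mathrm{Span}\{\Gamma_{i''j''}(U)\}$, which by Proposition \ref{Baza-2} is all of $T_USt_p^n$; this is the inclusion $\supseteq$. For $\subseteq$, any tangent vector is a linear combination of the $\Gamma$'s, hence equals $\Omega U$ for the corresponding linear combination $\Omega$ of the $\Lambda$'s, and $\Omega$ again lies in the subspace $W_{I_p}$.

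I do not expect any real obstacle beyond the index bookkeeping — one only has to check that the two families of Proposition \ref{Baza-2} jointly exhaust the pairs meeting $I_p$, and that $L_U|_{W_{I_p}}$ is injective (equivalently, an isomorphism onto $T_USt_p^n$), which is automatic because it carries a spanning set of the $\bigl(np-\tfrac{p(p+1)}{2}\bigr)$-dimensional space $W_{I_p}$ to a basis of the equidimensional space $T_USt_p^n$. If a self-contained argument avoiding the linear-independence part of Proposition \ref{Baza-2} is preferred, the same expansion as in the computation preceding that proposition (expanding $\Lambda_{ij}\bar U$ and invoking invertibility of $\bar U$) shows $L_U|_{W_{I_p}}$ is injective, and a dimension count then closes the argument.
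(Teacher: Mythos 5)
Your proposal is correct and takes essentially the same route as the paper: the paper offers no separate proof, stating Theorem \ref{spatiutangent} as an immediate consequence of Proposition \ref{Baza-2}, and your argument --- identifying the right-hand side with the image of the linear map $\Omega\mapsto\Omega U$ on $W_{I_p}$, whose image is the span of the vectors $\Gamma_{i'j'}(U)$, $\Gamma_{i''j''}(U)$ and hence all of $T_USt_p^n$ --- is exactly that deduction spelled out. The injectivity/dimension remark is a harmless extra, since the stated set equality needs only the image computation.
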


\section{Critical points of smooth functions defined on orthogonal Stiefel manifolds}

In this section, we give necessary and sufficient conditions for critical points of a smooth cost function defined on orthogonal Stiefel manifolds using the embedded vector field method introduced and used in \cite{Birtea-Comanescu-Hessian}, \cite{birtea-comanescu}, and \cite{birtea-comanescu-5-electron}. We apply these results to well-known cost functions as Procrustes and Penrose regression cost functions, sums of heterogeneous quadratic forms, and Brockett cost functions. We also discuss our results in comparison with previous results existing in the literature.

For a matrix $U\in  \mathcal{M}_{n\times p}(\R)$, we denote by ${\bf u}_1,...,{\bf u}_p\in \R^n$ the  vectors formed with the columns of the matrix $U$ and consequently, $U$ has the form $U=\left[{\bf u}_1,...,{\bf u}_p\right]$. If $U\in St_p^n=\{U\in \mathcal{M}_{n\times p}(\R) \,|\,U^TU=\mathbb{I}_p\}$, then the vectors ${\bf u}_1,...,{\bf u}_p\in \R^n$ are orthonormal.
We identify $\mathcal{M}_{n\times p}(\R)$ with $\R^{np}$ using the isomorphism $\text{vec}:\mathcal{M}_{n\times p}\rightarrow \R^{np}$ defined by $\text{vec}(U)\stackrel{\text{not}}{=}{\bf u}:=({\bf u}_1^T,...,{\bf u}_p^T)$.

The constraint functions $F_{aa},F_{bc}:\R^{np}\rightarrow \R$ that describe the Stiefel manifold as a preimage of a regular value are given by:
\begin{align}\label{constraints}
F_{aa} ({\bf u}) & =\frac{1}{2}||{\bf u}_a||^2,\,\,1 \leq a\leq p, \\
F_{bc} ({\bf u}) & = \left<{\bf u}_b,{\bf u}_c\right>,\,\,1\leq b<c\leq p. \label{constraints-2}
\end{align}
More precisely, we have ${\bf F}:\R^{np}\rightarrow \R^{\frac{p(p+1)}{2}}$, ${\bf F}:=\left( \dots , F_{aa},\dots ,F_{bc}, \dots\right)$, $$St_p^n\simeq {\bf F}^{-1}\left( \dots , \frac{1}{2},\dots ,0, \dots\right)\subset \R^{np}.$$
Consider a smooth cost function $\widetilde{G}:St^n_p\rightarrow \R$.  
In what follows we will address the problem of finding the critical points of the cost function $\widetilde{G}$ defined on the Stiefel manifold. In order to solve this problem, we consider a smooth extension $G:\R^{np}\rightarrow \R$ of the cost function $\widetilde{G}=G_{|_{St^n_p}}$ and we use the embedded gradient vector field method presented in \cite{Birtea-Comanescu-Hessian}, \cite{birtea-comanescu}, and \cite{birtea-comanescu-5-electron}.
The embedded gradient vector field is defined on the open set formed with the regular leaves of the constraint function and it  has the formula:
$$\partial G({\bf u})=\nabla G({\bf u})-\sum_{1\leq a\leq p}\sigma_{aa}({\bf u})\nabla F_{aa}({\bf u})-\sum_{1\leq b< c\leq p}\sigma_{bc}({\bf u})\nabla F_{bc}({\bf u}),$$
where $\sigma_{aa},\sigma_{bc}$ are the Lagrange multiplier functions. 

Using the property $(\partial G)_{|St^n_p}=\nabla_{{\bf g}_{\text{ind}}^{St^n_p}}\widetilde{G}$ proved in \cite{Birtea-Comanescu-Hessian} and \cite{birtea-comanescu}, we have the following necessary and sufficient conditions for a critical point of the cost function $\widetilde{G}$.

\begin{thm}
An element $U\in St^n_p$ is a critical point of the cost function $\widetilde{G}$ if and only if $\partial G({\bf u})={\bf 0}$.
\end{thm}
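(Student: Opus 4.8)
The plan is to obtain the statement as an immediate consequence of the identity $(\partial G)_{|St^n_p}=\nabla_{{\bf g}_{\text{ind}}^{St^n_p}}\widetilde{G}$ recalled just above, established in \cite{Birtea-Comanescu-Hessian} and \cite{birtea-comanescu}. First I would check that $\partial G$ is in fact defined at every point of $St^n_p$: since $St^n_p\simeq{\bf F}^{-1}(\dots,\tfrac12,\dots,0,\dots)$ is the preimage of a regular value, the differential of ${\bf F}$ is surjective at each $U\in St^n_p$, hence the Lagrange multiplier functions $\sigma_{aa},\sigma_{bc}$ are well defined and smooth on an open neighbourhood of $St^n_p$ in $\R^{np}$, and therefore so is $\partial G$. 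In particular $\partial G({\bf u})$ makes sense for every $U\in St^n_p$, and by the cited identity it lies in $T_USt^n_p$.

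Next I would unwind the definition of a critical point. By definition $U\in St^n_p$ is a critical point of $\widetilde{G}$ precisely when $d\widetilde{G}(U)=0$ as an element of $T_U^*St^n_p$. Because the induced Riemannian metric ${\bf g}_{\text{ind}}^{St^n_p}$ is non-degenerate, the associated musical isomorphism $T_USt^n_p\to T_U^*St^n_p$ sends $\nabla_{{\bf g}_{\text{ind}}^{St^n_p}}\widetilde{G}(U)$ to $d\widetilde{G}(U)$; consequently $d\widetilde{G}(U)=0$ if and only if $\nabla_{{\bf g}_{\text{ind}}^{St^n_p}}\widetilde{G}(U)=0$.

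Finally, evaluating $(\partial G)_{|St^n_p}=\nabla_{{\bf g}_{\text{ind}}^{St^n_p}}\widetilde{G}$ at $U$ gives $\partial G({\bf u})=\nabla_{{\bf g}_{\text{ind}}^{St^n_p}}\widetilde{G}(U)$. Chaining the two equivalences then yields: $U$ is a critical point of $\widetilde{G}$ $\Longleftrightarrow$ $\nabla_{{\bf g}_{\text{ind}}^{St^n_p}}\widetilde{G}(U)=0$ $\Longleftrightarrow$ $\partial G({\bf u})=0$, which is the claim.

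The only genuine content is the imported identity $(\partial G)_{|St^n_p}=\nabla_{{\bf g}_{\text{ind}}^{St^n_p}}\widetilde{G}$, so strictly speaking there is no real obstacle in this proof. If one wanted a self-contained argument, the main step would be to decompose $\nabla G(U)=(\nabla G(U))^{\top}+(\nabla G(U))^{\perp}$ into its components tangent and normal to $St^n_p$, note that $\text{span}\{\nabla F_{aa}(U),\nabla F_{bc}(U)\}=(T_USt^n_p)^{\perp}$ (again by regularity of ${\bf F}$), and verify that $\sigma_{aa}({\bf u}),\sigma_{bc}({\bf u})$ are exactly the coefficients of $(\nabla G(U))^{\perp}$ in this basis, so that $\partial G({\bf u})=(\nabla G(U))^{\top}=\nabla_{{\bf g}_{\text{ind}}^{St^n_p}}\widetilde{G}(U)$; I would only need to be careful that $\partial G$ is always evaluated at points where it is defined.
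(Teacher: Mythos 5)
Your argument is correct and coincides with the paper's own reasoning: the theorem is stated there as an immediate consequence of the identity $(\partial G)_{|St^n_p}=\nabla_{{\bf g}_{\text{ind}}^{St^n_p}}\widetilde{G}$ imported from \cite{Birtea-Comanescu-Hessian} and \cite{birtea-comanescu}, combined with the standard fact that critical points are exactly the zeros of the Riemannian gradient. Your extra checks (that $\partial G$ is defined on a neighbourhood of $St^n_p$ by regularity of the constraints, and the sketch of the tangential/normal decomposition) are sound but not needed beyond what the paper already assumes.
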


In the case of orthogonal constraints the Lagrange multiplier functions, see \cite{Birtea-Comanescu-Hessian}, are given by the formulas: 
\begin{equation}\label{Lagrange-multipliers-functions}
\left.\begin{array}{l}
\sigma_{aa}({\bf u})=  \left<\nabla G({\bf u}),\nabla F_{aa}({\bf u})\right>=\left<\displaystyle\frac{\partial G}{\partial {\bf u}_a}({\bf u}),{\bf u}_a\right>;\\ \\
\sigma_{bc}({\bf u})= \left<\nabla G({\bf u}),\nabla F_{bc}({\bf u})\right>=\displaystyle\frac{1}{2}\left(\left<\displaystyle\frac{\partial G}{\partial {\bf u}_c}({\bf u}),{\bf u}_b\right>+\left<\displaystyle\frac{\partial G}{\partial {\bf u}_b}({\bf u}),{\bf u}_c\right>\right).\end{array}\right.
\end{equation}
Note that in general $\left<\displaystyle\frac{\partial G}{\partial {\bf u}_c}({\bf u}),{\bf u}_b\right>\neq \left<\displaystyle\frac{\partial G}{\partial {\bf u}_b}({\bf u}),{\bf u}_c\right>$. 

If $U$ is a critical point of $\widetilde{G}$, then $\sigma_{aa}({\bf u})$, $\sigma_{bc}({\bf u})$ become the classical Lagrange multipliers. 
The embedded gradient vector field is a more explicit form of the equivalent projected gradient vector field described in \cite{rosen}.
The solutions of the equation $\partial G({\bf u})={\bf 0}$ are critical points for the function $G$ restricted to regular leaves of the constraint functions. 
Consequently, using again the identification $\text{vec}(U)={\bf u}$,  a matrix $U\in \mathcal{M}_{n\times p}(\R)$ is a critical point for the cost function $\widetilde{G}=G_{|_{St^n_p}}$ if and only if $\partial G({\bf u})={\bf 0}$ and $U^TU=\mathbb{I}_p$, or equivalently, 
\begin{equation}\label{ecuatie-embedded}
\begin{cases}
\nabla G({\bf u})-\sum\limits_{1\leq a\leq p}\sigma_{aa}({\bf u})\nabla F_{aa}({\bf u})-\sum\limits_{1\leq b< c\leq p}\sigma_{bc}({\bf u})\nabla F_{bc}({\bf u})={\bf 0} \\
U^TU=\mathbb{I}_p.
\end{cases}
\end{equation}

Next, we give necessary and sufficient conditions  for a critical point.

\begin{thm}\label{egregium}
	A matrix $U\in \mathcal{M}_{n\times p}(\R)$ is a critical point for the cost function $\widetilde{G}=G_{|_{St^n_p}}$ {\bf if and only if} the following conditions are simultaneously satisfied:
	\begin{itemize}
		
		\item [(i)] $\left<\displaystyle\frac{\partial G}{\partial {\bf u}_c}({\bf u}),{\bf u}_b\right>=\left<\displaystyle\frac{\partial G}{\partial {\bf u}_b}({\bf u}),{\bf u}_c\right>,\,\,\,\text{for all}\,\,\,1\leq b<c\leq p$;
		
		\item [(ii)] $ \displaystyle\frac{\partial G}{\partial {\bf u}_a}({\bf u})\in {\normalfont \text{Span}}\{{\bf u}_1,...,{\bf u}_p\}
,\,\,\,\text{for all}\,\,1\leq a\leq p$;
		
		\item [(iii)] $U^TU=\mathbb{I}_p$,
	\end{itemize}
	where ${\bf u}=\normalfont{\text{vec}}(U)$.
\end{thm}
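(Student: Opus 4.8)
The plan is to start from the characterization already available, namely that $U$ is a critical point of $\widetilde G=G_{|_{St^n_p}}$ if and only if $\partial G({\bf u})={\bf 0}$ together with $U^TU=\mathbb{I}_p$, and then unpack the condition $\partial G({\bf u})={\bf 0}$ column by column. Writing $\partial G({\bf u})=\nabla G({\bf u})-\sum_a\sigma_{aa}\nabla F_{aa}({\bf u})-\sum_{b<c}\sigma_{bc}\nabla F_{bc}({\bf u})$ and using $\nabla F_{aa}({\bf u})$ has only the block $\mathbf{u}_a$ in slot $a$, while $\nabla F_{bc}({\bf u})$ has $\mathbf{u}_c$ in slot $b$ and $\mathbf{u}_b$ in slot $c$, the $a$-th column block of the equation $\partial G({\bf u})={\bf 0}$ reads
\begin{equation*}
\frac{\partial G}{\partial {\bf u}_a}({\bf u})=\sigma_{aa}({\bf u})\,{\bf u}_a+\sum_{c>a}\sigma_{ac}({\bf u})\,{\bf u}_c+\sum_{b<a}\sigma_{ba}({\bf u})\,{\bf u}_b.
\end{equation*}
This already shows that $\partial G({\bf u})={\bf 0}$ forces $\frac{\partial G}{\partial {\bf u}_a}({\bf u})\in\text{Span}\{{\bf u}_1,\dots,{\bf u}_p\}$, i.e. condition (ii); so (ii) and (iii) are necessary. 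The point of condition (i) is that it is the extra constraint needed for the \emph{converse} and for consistency of the coefficients.

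For the forward direction it remains to extract (i). The natural move is: assume $\partial G({\bf u})={\bf 0}$ and $U^TU=\mathbb{I}_p$, then pair the $a$-th block equation above with ${\bf u}_d$ for $d\neq a$. Using $\langle{\bf u}_i,{\bf u}_j\rangle=\delta_{ij}$ (from (iii)), the right-hand side collapses to the single coefficient $\sigma_{ad}({\bf u})$ (with the convention $\sigma_{ad}=\sigma_{da}$), giving $\langle\frac{\partial G}{\partial {\bf u}_a}({\bf u}),{\bf u}_d\rangle=\sigma_{ad}({\bf u})$. Since the right-hand side is symmetric in $a$ and $d$ by definition of $\sigma_{bc}$ in \eqref{Lagrange-multipliers-functions}, we immediately get $\langle\frac{\partial G}{\partial {\bf u}_a}({\bf u}),{\bf u}_d\rangle=\langle\frac{\partial G}{\partial {\bf u}_d}({\bf u}),{\bf u}_a\rangle$, which is (i). (Alternatively, one can observe directly from \eqref{Lagrange-multipliers-functions} that $\langle\frac{\partial G}{\partial {\bf u}_c},{\bf u}_b\rangle-\langle\frac{\partial G}{\partial {\bf u}_b},{\bf u}_c\rangle$ is, up to sign, the ``antisymmetric part'' that must vanish when the gradient lies in the span.)

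For the converse, I would assume (i), (ii), (iii) and reconstruct $\partial G({\bf u})={\bf 0}$. By (ii) and (iii), $\{{\bf u}_1,\dots,{\bf u}_p\}$ is an orthonormal basis of its span and $\frac{\partial G}{\partial {\bf u}_a}({\bf u})=\sum_{d}\langle\frac{\partial G}{\partial {\bf u}_a}({\bf u}),{\bf u}_d\rangle\,{\bf u}_d$. Plugging this into the formula for the $a$-th block of $\partial G({\bf u})$, the diagonal term $\langle\frac{\partial G}{\partial {\bf u}_a}({\bf u}),{\bf u}_a\rangle{\bf u}_a$ is exactly $\sigma_{aa}({\bf u}){\bf u}_a$, and for $d\neq a$ the term $\langle\frac{\partial G}{\partial {\bf u}_a}({\bf u}),{\bf u}_d\rangle{\bf u}_d$ must be matched against $\sigma_{ad}({\bf u}){\bf u}_d=\frac12(\langle\frac{\partial G}{\partial {\bf u}_d},{\bf u}_a\rangle+\langle\frac{\partial G}{\partial {\bf u}_a},{\bf u}_d\rangle){\bf u}_d$; condition (i) says precisely that $\langle\frac{\partial G}{\partial {\bf u}_a}({\bf u}),{\bf u}_d\rangle=\sigma_{ad}({\bf u})$, so the $a$-th block of $\partial G({\bf u})$ vanishes. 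Doing this for every $a$ gives $\partial G({\bf u})={\bf 0}$, and together with (iii) this is the criterion from the previous theorem, so $U$ is a critical point.

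The computations here are entirely routine linear algebra; the only thing to be careful about is bookkeeping the index conventions ($b<c$ versus unordered pairs, and the factor $\frac12$ in $\sigma_{bc}$), so I would fix once and for all the convention $\sigma_{cb}:=\sigma_{bc}$ for $b<c$ and write the $a$-th block of $\nabla G({\bf u})-\sum\sigma\nabla F$ cleanly before pairing with basis vectors. I expect no genuine obstacle — the heart of the matter is the observation that $\langle\frac{\partial G}{\partial {\bf u}_a}({\bf u}),{\bf u}_d\rangle$ is forced to equal the (symmetric) multiplier $\sigma_{ad}({\bf u})$, so the only obstruction to $\partial G=0$ beyond ``gradient in the span'' is the symmetry relation (i).
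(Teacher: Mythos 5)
Your proposal is correct and follows essentially the same route as the paper's own proof: both directions rest on the characterization $\partial G({\bf u})={\bf 0}$, the column-block decomposition of the embedded gradient with the multipliers \eqref{Lagrange-multipliers-functions}, extraction of (ii) directly from the block equations, and then (i) via the orthonormal expansion $\frac{\partial G}{\partial {\bf u}_a}({\bf u})=\sum_d\left<\frac{\partial G}{\partial {\bf u}_a}({\bf u}),{\bf u}_d\right>{\bf u}_d$ and matching of coefficients (your pairing with ${\bf u}_d$ is the same step as the paper's linear-independence argument), with the converse argued identically. No gaps.
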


\begin{proof}
First, we will prove that the conditions $(i)$, $(ii)$ and $(iii)$ of the Theorem are necessary. By straightforward computations, the hypothesis $\partial G({\bf u})={\bf 0}$ is equivalent with the following system of equations:
\begin{equation}\label{embedded-explicit}
\begin{cases}
\displaystyle\frac{\partial G}{\partial {\bf u}_a}({\bf u})-\sum\limits_{d=1}^a\sigma_{da}({\bf u}){\bf u}_d-\sum\limits_{d=a+1}^p\sigma_{ad}({\bf u}){\bf u}_d={\bf 0},\,\,\,\forall a\in \{1,...,p-1\} \\
\displaystyle\frac{\partial G}{\partial {\bf u}_p}({\bf u})-\sum\limits_{d=1}^p\sigma_{dp}({\bf u}){\bf u}_d={\bf 0},
\end{cases}
\end{equation}	
where the Lagrange multiplier functions $\sigma_{ad}$ are given by \eqref{Lagrange-multipliers-functions}. 
From the above equalities it follows $(ii)$. From $(ii)$ and the hypothesis that $U\in St^n_p$ we have:
\begin{equation}\label{orto}
\frac{\partial G}{\partial {\bf u}_a}({\bf u})=\sum_{d=1}^p\left<\frac{\partial G}{\partial {\bf u}_a}({\bf u}),{\bf u}_d\right>{\bf u}_d.
\end{equation}
Substituting \eqref{orto} into \eqref{embedded-explicit} and using the linear independence of the vectors ${\bf u}_1,...,{\bf u}_p$ formed with the columns of the matrix $U$, we obtain $(i)$.

For sufficiency, we solve the set of equations $(i)$ and $(ii)$. This implies that
\begin{equation*}
\frac{\partial G}{\partial {\bf u}_a}({\bf u})=\sum_{d=1}^p\lambda_d{\bf u}_d.
\end{equation*} 
Among these solutions, we choose the ones that belong to $St^n_p$, and consequently for those solutions we obtain
$$\lambda_d=\left<\frac{\partial G}{\partial {\bf u}_a}({\bf u}),{\bf u}_d\right>.$$
Using $(i)$ and the fact that
\begin{equation*}
\frac{\partial G}{\partial {\bf u}_a}({\bf u})=\sum_{d=1}^p\left<\frac{\partial G}{\partial {\bf u}_a}({\bf u}),{\bf u}_d\right>{\bf u}_d.
\end{equation*} 
we obtain the desired equality $\partial G({\bf u})={\bf 0}$. 
\end{proof}

If the condition $(iii)$ of the above theorem is satisfied, then the condition $(ii)$  can be replaced with
\begin{equation}
(\mathbb{I}_n-UU^T)\frac{\partial G}{\partial {\bf u}_a}({\bf u})={\bf 0},\,\,\,\text{for all}\,\,1\leq a\leq p,
\end{equation}
where the matrix $\mathbb{I}_n-UU^T$ is associated to the orthogonal projection in $\R^n$ onto the subspace normal to $\text{Span}\{{\bf u}_1,...,{\bf u}_p\}$. The necessity of conditions $(i)$ and $(ii)$ have been  previously discovered in \cite{chu} in the context of orthogonal Procrustes problem and Penrose regression problem. We will present the details later in the paper.

The above Theorem shows that, in order to find the critical points of the cost function $\widetilde{G}=G_{|_{St^n_p}}$ it is necessary and sufficient to solve the system of equations $(i)$ and $(ii)$ and among them choose the ones that belong to the orthogonal Stiefel manifold $St^n_p$.

The necessary and sufficient conditions of the above theorem are natural for orthogonal Stiefel manifolds in the sense that orthogonal Stiefel manifolds are in-between the sphere ($p=1$) and the orthogonal group ($p=n$).
In the case when $p=1$, we obtain that the necessary and sufficient conditions of Theorem \ref{egregium} reduce to the radial condition $\displaystyle\frac{\partial G}{\partial {\bf u}}({\bf u})=\lambda{\bf u}$, $\lambda\in \R$ for a critical point of a function restricted to a sphere. When $p=n$, we are in the case of orthogonal group $SO(n)$ and  the necessary and sufficient conditions of Theorem \ref{egregium} reduce to the symmetric condition $(i)$.

In order to formulate the necessary and sufficient conditions of Theorem \ref{egregium} in a matrix form, we need to write the  embedded vector field $\partial G$ in a matrix form.
For the following considerations we make the notations 
$$\nabla G(U):=\text{vec}^{-1}(\nabla G({\bf u}))\in \mathcal{M}_{n\times p}(\R),$$ 
$$\partial G(U):=\text{vec}^{-1}(\partial G({\bf u}))\in \mathcal{M}_{n\times p}(\R).$$
We introduce the symmetric matrix
$$\Sigma(U): =\left[\sigma_{bc}({\bf u})\right]\in \mathcal{M}_{p\times p}(\R),$$
where we define $\sigma_{cb}({\bf u}):=\sigma_{bc}({\bf u})$ for $1\leq b<c\leq p$.
 For the particular case of orthogonal Stiefel manifold, by a straightforward computation using \eqref{Lagrange-multipliers-functions}, we have 
\begin{equation}
\Sigma(U)=\frac{1}{2}\left(\nabla G(U)^TU+U^T\nabla G(U)\right).
\end{equation}

Computing $\text{vec}^{-1}\left(\nabla F_{aa}({\bf u})\right)$ and $\text{vec}^{-1}(\nabla F_{bc}({\bf u}))$, the matrix form of the embedded gradient vector field is given by 
\begin{equation}
\partial G(U)=\nabla G(U)-U\Sigma(U).
\end{equation}
From the geometrical point of view, the vector $\nabla G(U)\in \mathcal{M}_{n\times p}(\R)$ does not in general belong to the tangent space $T_USt^n_p$. The vector $-U\Sigma(U)$ is a correcting term so that the vector $\partial G(U)\in T_USt^n_p$ for every $U\in St^n_p$. This has been proved in  \cite{Birtea-Comanescu-Hessian}.
The matrix form of the system of equations \eqref{ecuatie-embedded} is given by
\begin{equation}\label{ecuatie-matriciala-embedded}
\begin{cases}
\nabla G(U)-U\Sigma(U)={\bf 0} \\
U^TU=\mathbb{I}_p.
\end{cases}
\end{equation}

The conditions of Theorem \ref{egregium} can be written in matrix form in the following way. 

\begin{thm}\label{second-egregium}
A matrix $U\in \mathcal{M}_{n\times p}(\R)$ is a critical point for the cost function $\widetilde{G}=G_{|_{St^n_p}}$ { if and only if} the following conditions are simultaneously satisfied:
\begin{equation}\label{conditii-matriceale}
\left.
\begin{array}{l}
(i)\,\,U^T\nabla G(U)=\nabla G(U)^TU; \\
 \\
(ii)\,\,\nabla G(U)=UU^T\nabla G(U); \\
\\
(iii)\,\,U^TU=\mathbb{I}_p.
\end{array}\right.
\end{equation}

	%
%
%
\end{thm}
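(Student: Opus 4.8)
The plan is to reduce Theorem \ref{second-egregium} to the matrix form of the critical point equation \eqref{ecuatie-matriciala-embedded} already established: I will show that the single matrix equation $\nabla G(U)-U\Sigma(U)=\mathbb{O}_{n\times p}$ together with $U^TU=\mathbb{I}_p$ is equivalent to the simultaneous conditions $(i)$, $(ii)$, $(iii)$ of \eqref{conditii-matriceale}. (One could instead translate the scalar conditions of Theorem \ref{egregium} entry by entry, but working directly with \eqref{ecuatie-matriciala-embedded} is shorter.) Throughout I use the explicit expression $\Sigma(U)=\frac12\left(\nabla G(U)^TU+U^T\nabla G(U)\right)$ and the fact, immediate from it, that $\Sigma(U)$ is symmetric.

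For necessity, suppose $U$ is a critical point, so \eqref{ecuatie-matriciala-embedded} holds. I would first left-multiply the identity $\nabla G(U)=U\Sigma(U)$ by $U^T$ and use $(iii)$ to cancel $U^TU=\mathbb{I}_p$, obtaining $U^T\nabla G(U)=\Sigma(U)$. Since the right-hand side is symmetric, transposing gives $U^T\nabla G(U)=\nabla G(U)^TU$, which is $(i)$. Substituting $\Sigma(U)=U^T\nabla G(U)$ back into $\nabla G(U)=U\Sigma(U)$ yields $\nabla G(U)=UU^T\nabla G(U)$, which is $(ii)$; and $(iii)$ is part of \eqref{ecuatie-matriciala-embedded}. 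For sufficiency, assume $(i)$, $(ii)$, $(iii)$. Condition $(i)$ says exactly that the two summands in $\Sigma(U)=\frac12\left(\nabla G(U)^TU+U^T\nabla G(U)\right)$ coincide, hence $\Sigma(U)=U^T\nabla G(U)$. Then $U\Sigma(U)=UU^T\nabla G(U)=\nabla G(U)$, the last equality being $(ii)$. Therefore $\nabla G(U)-U\Sigma(U)=\mathbb{O}_{n\times p}$, and together with $(iii)$ this is precisely \eqref{ecuatie-matriciala-embedded}, so $U$ is a critical point of $\widetilde{G}$.

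The argument is essentially routine; there is no real obstacle, and the only point needing a little care is to keep the equivalence genuinely two-sided — in the necessity direction one truly uses $(iii)$ to cancel $U^TU$, while in the sufficiency direction one needs both $(i)$ (to collapse $\Sigma(U)$) and $(ii)$ (to recover $\nabla G(U)$). As a consistency check, note that the $(b,c)$-entry of $U^T\nabla G(U)$ is $\langle {\bf u}_b,\frac{\partial G}{\partial {\bf u}_c}({\bf u})\rangle$, so $(i)$ of \eqref{conditii-matriceale} is the symmetry condition $(i)$ of Theorem \ref{egregium}, and the $a$-th column of $\nabla G(U)-UU^T\nabla G(U)$ is the component of $\frac{\partial G}{\partial {\bf u}_a}({\bf u})$ orthogonal to $\mathrm{Span}\{{\bf u}_1,\dots,{\bf u}_p\}$, so $(ii)$ of \eqref{conditii-matriceale} is condition $(ii)$ of Theorem \ref{egregium}; this is why the two formulations agree, but the proof via \eqref{ecuatie-matriciala-embedded} avoids passing through coordinates.
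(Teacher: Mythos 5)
Your proof is correct, and it takes a slightly different route than the paper. The paper offers no separate computation for Theorem \ref{second-egregium}: it obtains it by translating the column-wise conditions of Theorem \ref{egregium} into matrix notation (condition $(i)$ there being entrywise the symmetry of $U^T\nabla G(U)$, and condition $(ii)$, in the presence of $(iii)$, being the column-wise form of $\nabla G(U)=UU^T\nabla G(U)$) — a translation you yourself carry out in your closing consistency check. You instead bypass Theorem \ref{egregium} and argue directly from the matrix critical-point system \eqref{ecuatie-matriciala-embedded}, using the explicit expression $\Sigma(U)=\frac{1}{2}\left(\nabla G(U)^TU+U^T\nabla G(U)\right)$ and its symmetry: left-multiplying $\nabla G(U)=U\Sigma(U)$ by $U^T$ and invoking $(iii)$ gives $\Sigma(U)=U^T\nabla G(U)$, whence $(i)$ and $(ii)$; conversely $(i)$ collapses $\Sigma(U)$ to $U^T\nabla G(U)$ and $(ii)$ then gives $\partial G(U)=\mathbb{O}_{n\times p}$. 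Both arguments rest on the same prior fact (criticality of $\widetilde G$ at $U$ is equivalent to $\partial G(U)=\mathbb{O}_{n\times p}$ together with $U^TU=\mathbb{I}_p$); the paper's route keeps the geometric, column-vector interpretation in view, while yours is a shorter, self-contained matrix computation that makes explicit where $(i)$, $(ii)$ and $(iii)$ are each used, and it is valid as written.
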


Using the classical Lagrange multiplier approach for constraint optimization problems, in \cite{wen} the equations that have to be solved in order to find the critical points of the cost function $\widetilde{G}=G_{|_{St^n_p}}$ are 
\begin{equation}\label{eq-critic}
\begin{cases}
\nabla G(U)-U\nabla G(U)^TU=\mathbb{O}_{n\times p} \\
U^TU=\mathbb{I}_p,
\end{cases}
\end{equation}
which is an equivalent matrix form for the system of equations \eqref{conditii-matriceale} in the case of orthogonal Stiefel manifold. {\bf Note that the vector field $\nabla G(U)-U\nabla G(U)^TU\neq \partial G(U)$ when $U\in ST^n_p$ is not a critical point of $G$.}

Also, in the same paper \cite{wen} the following equivalent necessary and sufficient conditions for critical points have been obtained:
\begin{equation}\label{conditie-chinezi}
\begin{cases}
\nabla G(U)U^T-U\nabla G(U)^T=\mathbb{O}_{n\times n} \\
U^TU=\mathbb{I}_p.
\end{cases}
\end{equation}
We give a short proof of the equivalence between the equations \eqref{eq-critic} and \eqref{conditie-chinezi}. Multiplying \eqref{conditie-chinezi} to the right with the matrix $U$ and using the Stiefel condition $U^TU=\mathbb{I}_p$, we obtain \eqref{eq-critic}. Now assume that \eqref{eq-critic} holds, i.e. $\nabla G(U)=U\nabla G(U)^TU=U\left(U^T\nabla G(U)U^T\right)U=UU^T\nabla G(U)$. Multiplying to the right with $U^T$ we obtain
$\nabla G(U)U^T=U\left(U^T\nabla G(U)U^T\right)=U\nabla G(U)^T$, which is exactly the first equation of \eqref{conditie-chinezi}.

Consequently, necessary and sufficient conditions of Theorem \ref{second-egregium} are equivalent with necessary and sufficient conditions \eqref{conditie-chinezi} obtained in \cite{wen}. The difference between the two sets of necessary and sufficient conditions is that in Theorem \ref{second-egregium} the equations imply natural relations involving the columns of the matrix $U$, i.e. components of the orthonormal vectors ${\bf u}_1,...,{\bf u}_p$, while \eqref{conditie-chinezi} involves equations containing the lines of the matrix $U$. 

Using the particularity of the Stiefel constraints, in \cite{rapcsak} are given other equivalent conditions with those from \eqref{conditii-matriceale}.
\medskip

{\bf Critical points for orthonormal Procrustes cost function}. We consider the following  optimization problem:
\begin{equation}
\left.
\begin{array}{l}
\hbox{Minimize}\, ||AU-B||^2 \\
U^TU=\mathbb{I}_p
\end{array}\right.,
\end{equation}
where $A\in \mathcal{M}_{m\times n}(\R)$, $B\in \mathcal{M}_{m\times p}(\R)$, $U\in \mathcal{M}_{n\times p}(\R)$, and $||\cdot ||$ is the Frobenius norm. The cost function associated to this optimization problem is given by $\widetilde{G}:St^n_p\rightarrow \R$ and its natural extension $G:\R^{np}\rightarrow \R$, 
$$G({\bf u})=\frac{1}{2}||AU-B||^2=\frac{1}{2}\hbox{tr}(U^TA^TAU)-\hbox{tr}(U^TA^TB)+\frac{1}{2}\hbox{tr}(B^TB).$$
In the following, we will give the specifics of the necessary and sufficient conditions of Theorem \ref{second-egregium} in the case of Procrustes cost function.
By a straightforward computation we have that
$$\nabla G(U)=A^TAU-A^TB.$$ 
Consequently, the condition $(i)$ is equivalent with the symmetry of the matrix $U^TA^TAU-B^TAU$. As the matrix $U^TA^TAU$ is symmetric, we obtain that condition $(i)$ of Theorem \ref{second-egregium} is equivalent with the symmetry of the matrix $B^TAU$. This condition has been previously obtained in \cite{chu}. The condition $(ii)$ of the Theorem \ref{second-egregium} is equivalent with $(\mathbb{I}_n-UU^T)(A^TAU-A^TB)={\bf 0}$, condition also previously obtained in \cite{chu}. The following result shows that the necessary conditions presented in \cite{chu} for the Procrustes cost function are also sufficient conditions.

\begin{thm}
A matrix $U\in St^n_p$ is a critical point for the Procrustes cost function if and only if:
\begin{itemize}
\item [(i)] the matrix $B^TAU$ is symmetric;
\item [(ii)] $(\mathbb{I}_n-UU^T)(A^TAU-A^TB)={\bf 0}$.

\end{itemize}

\end{thm}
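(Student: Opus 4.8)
The plan is to specialize the matrix-form conditions of Theorem \ref{second-egregium} to the Procrustes extension $G$ and show that conditions (i) and (ii) stated here are exactly conditions (i), (ii) of that theorem once the Stiefel constraint (iii) is assumed. The ingredient we need is the gradient computation $\nabla G(U)=A^TAU-A^TB$, which follows by a routine differentiation of $G({\bf u})=\frac{1}{2}\tr(U^TA^TAU)-\tr(U^TA^TB)+\frac{1}{2}\tr(B^TB)$; I would record this first.

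Next I would rewrite condition (i) of Theorem \ref{second-egregium}, namely $U^T\nabla G(U)=\nabla G(U)^TU$, by substituting the gradient: it becomes $U^TA^TAU-U^TA^TB = U^TA^TAU-B^TAU$, i.e. $U^TA^TB=B^TAU$. Since $(U^TA^TB)^T=B^TAU$, this equality says precisely that $B^TAU$ is symmetric, giving condition (i) of the present theorem. For condition (ii) of Theorem \ref{second-egregium}, $\nabla G(U)=UU^T\nabla G(U)$, I would rearrange it as $(\mathbb{I}_n-UU^T)\nabla G(U)={\bf 0}$ and substitute the gradient to get $(\mathbb{I}_n-UU^T)(A^TAU-A^TB)={\bf 0}$, which is exactly condition (ii) here. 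Since condition (iii) of Theorem \ref{second-egregium} is the hypothesis $U\in St^n_p$, the equivalence is complete.

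In fact the only genuine content beyond bookkeeping is the remark, already made in the surrounding text, that $U^TA^TAU$ is symmetric, so that the symmetry of $U^T\nabla G(U)-\text{(its transpose)}$ reduces to the symmetry of $B^TAU$ alone; this is immediate. I do not anticipate a real obstacle: the argument is a direct translation of Theorem \ref{second-egregium} through the explicit formula for $\nabla G(U)$, and the ``if and only if'' is inherited verbatim from that theorem since each of the three conditions is rewritten by an equivalence. If one wished, one could alternatively derive the statement from the column-wise Theorem \ref{egregium} by computing $\frac{\partial G}{\partial {\bf u}_a}({\bf u})=A^TA{\bf u}_a-A^T{\bf b}_a$ and reading off conditions (i) and (ii) columnwise, but the matrix-form route is cleaner and is the one I would present.
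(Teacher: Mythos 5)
Your proposal is correct and matches the paper's own argument: the paper likewise computes $\nabla G(U)=A^TAU-A^TB$, notes that $U^TA^TAU$ is symmetric so that condition (i) of Theorem \ref{second-egregium} reduces to the symmetry of $B^TAU$, and rewrites condition (ii) as $(\mathbb{I}_n-UU^T)(A^TAU-A^TB)={\bf 0}$. Nothing is missing; the equivalence is inherited from Theorem \ref{second-egregium} exactly as you describe.
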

A different approach for studying critical points of Procrustes problem using normal and secular equations has been undertaken in \cite{elden}.
\medskip

{\bf Critical points for Penrose regression cost function.} The Penrose regression problem is the following optimization problem:
\begin{equation}
\left.
\begin{array}{l}
\hbox{Minimize}\, ||AUC-B||^2 \\
U^TU=\mathbb{I}_p
\end{array}\right.,
\end{equation}
where $A\in \mathcal{M}_{m\times n}(\R)$, $B\in \mathcal{M}_{m\times q}(\R)$, $C\in \mathcal{M}_{p\times q}(\R)$, $U\in \mathcal{M}_{n\times p}(\R)$, and $||\cdot ||$ is the Frobenius norm.  The cost function associated to this optimization problem is given by $\widetilde{G}:St^n_p\rightarrow \R$ and its natural extension $G:\R^{np}\rightarrow \R$, 
$$G({\bf u})=\frac{1}{2}||AUC-B||^2=\frac{1}{2}\hbox{tr}(C^TU^TA^TAUC)-\hbox{tr}(C^TU^TA^TB)+\frac{1}{2}\hbox{tr}(B^TB).$$
By a straightforward computation, we have that
$$\nabla G(U)=A^T(AUC-B)C^T.$$ 
The necessary and sufficient conditions of Theorem \ref{second-egregium} for critical points become in this case:

\begin{thm}
A matrix $U\in St^n_p$ is a critical point for the Penrose regression cost function if and only if:
\begin{itemize}
\item [(i)] the matrix $C(AUC-B)^TAU$ is symmetric;
\item [(ii)] $(\mathbb{I}_n-UU^T)A^T(AUC-B)C^T={\bf 0} .$
\end{itemize}
\end{thm}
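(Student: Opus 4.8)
The plan is to specialize Theorem \ref{second-egregium} to the Penrose regression cost function by substituting the gradient formula $\nabla G(U)=A^T(AUC-B)C^T$ into each of the three conditions \eqref{conditii-matriceale} and simplifying. Condition $(iii)$, $U^TU=\mathbb{I}_p$, is already the statement that $U\in St^n_p$, so it requires no work. The bulk of the argument lies in translating conditions $(i)$ and $(ii)$ into the stated form.

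For condition $(i)$, I would compute $U^T\nabla G(U)=U^TA^T(AUC-B)C^T=(AU)^T(AUC-B)C^T$ and recognize that \eqref{conditii-matriceale}$(i)$ asserts this $p\times p$ matrix equals its own transpose, i.e.\ that $C(AUC-B)^T(AU)$ is symmetric. Here one should be a little careful: the claim in the theorem is literally the symmetry of $C(AUC-B)^TAU$, so I just need to observe that $\left(U^TA^T(AUC-B)C^T\right)^T = C(AUC-B)^TAU$ and hence $U^T\nabla G(U)=\nabla G(U)^TU$ is equivalent to $C(AUC-B)^TAU=\left(C(AUC-B)^TAU\right)^T$. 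No further simplification (such as peeling off a symmetric summand, as was done in the Procrustes case) is available or needed here, since $C$ need not be square.

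For condition $(ii)$, I would rewrite \eqref{conditii-matriceale}$(ii)$, namely $\nabla G(U)=UU^T\nabla G(U)$, in the equivalent form $(\mathbb{I}_n-UU^T)\nabla G(U)={\bf 0}$, and then substitute the gradient to obtain $(\mathbb{I}_n-UU^T)A^T(AUC-B)C^T={\bf 0}$, which is exactly condition $(ii)$ of the statement. I would note that $\mathbb{I}_n-UU^T$ is the orthogonal projector onto the orthogonal complement of $\text{Span}\{{\bf u}_1,\dots,{\bf u}_p\}$, as already remarked in the text after Theorem \ref{egregium}, so the equivalence $\nabla G(U)\in \text{Span}\{{\bf u}_1,\dots,{\bf u}_p\}^{\perp\perp}$ versus $(\mathbb{I}_n-UU^T)\nabla G(U)={\bf 0}$ is immediate on columns.

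There is essentially no obstacle here: the proof is a direct substitution followed by elementary matrix algebra, and it is entirely parallel to the Procrustes computation carried out just above in the excerpt. The only point demanding any attention is bookkeeping with transposes in condition $(i)$ — making sure the symmetric matrix is presented in the orientation stated — and keeping track of the shapes of $A,B,C,U$ so that all products are well defined. I would therefore present the proof in two short steps, one per nontrivial condition, each reduced to the corresponding line of \eqref{conditii-matriceale}, and invoke Theorem \ref{second-egregium} to conclude.
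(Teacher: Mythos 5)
Your proposal is correct and matches the paper's own (very brief) argument: the paper obtains this theorem precisely by substituting $\nabla G(U)=A^T(AUC-B)C^T$ into the matrix conditions of Theorem \ref{second-egregium}, with condition $(i)$ becoming the symmetry of $C(AUC-B)^TAU$ and condition $(ii)$ the projector equation $(\mathbb{I}_n-UU^T)A^T(AUC-B)C^T={\bf 0}$. Your transpose bookkeeping and the remark that no further simplification is possible (unlike the Procrustes case, where the symmetric term $U^TA^TAU$ drops out) are both accurate.
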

These conditions have been previously found in \cite{chu} as necessary conditions for critical points of the Penrose regression cost function.
\medskip

{\bf Critical points for sums of heterogeneous quadratic forms.} 
Consider the following optimization problem on orthogonal Stiefel manifold $St^n_p$, extensively studied in \cite{balogh} and \cite{rapcsak}: 
\begin{equation}
\left.
\begin{array}{l}
\hbox{Minimize}\, \sum\limits_{i=1}^p{\bf u}_i^TA_i{\bf u}_i \\
U^TU=\mathbb{I}_p
\end{array}\right.,
\end{equation}
where $A_i$ are $n\times n$ symmetric matrices and ${\bf u}_i$ are the column vectors of the the matrix $U\in St^n_p$. 
By a straightforward computation, we have that
$$\nabla G(U)=\left[A_1{\bf u}_1,...,A_p{\bf u}_p\right].$$ 
The necessary and sufficient conditions of Theorem \ref{second-egregium} for critical points become in this case:
\begin{thm}
A matrix $U\in St^n_p$ is a critical point for the cost function $\sum\limits_{i=1}^p{\bf u}_i^TA_i{\bf u}_i$ if and only if:
\begin{itemize}
\item [(i)] $U^TA(U)=A(U)^TU$;
\item [(ii)] $A(U)=UU^TA(U),$
\end{itemize}
where we have made the notation $A(U)=\left[A_1{\bf u}_1,...,A_p{\bf u}_p\right]$.
\end{thm}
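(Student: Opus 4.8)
The plan is to obtain this statement as an immediate specialization of Theorem~\ref{second-egregium}; the only ingredient not yet in place is the Euclidean gradient of the cost function, which is in fact already recorded just above the statement.

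\textbf{Step 1: the gradient.} For $G(\mathbf{u})=\sum_{i=1}^p\mathbf{u}_i^TA_i\mathbf{u}_i$, and using that $\mathbf{u}_1,\dots,\mathbf{u}_p$ are independent blocks of the coordinate $\mathbf{u}\in\R^{np}$, the block derivatives are $\frac{\partial G}{\partial\mathbf{u}_i}(\mathbf{u})=(A_i+A_i^T)\mathbf{u}_i$. Invoking the symmetry of each $A_i$, this equals (a fixed scalar multiple of) $A_i\mathbf{u}_i$, so that, with the normalization used in the excerpt, $\nabla G(U)=\mathrm{vec}^{-1}(\nabla G(\mathbf{u}))=\big[A_1\mathbf{u}_1,\dots,A_p\mathbf{u}_p\big]=A(U)$.

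\textbf{Step 2: substitution into Theorem~\ref{second-egregium}.} Substituting $\nabla G(U)=A(U)$ into the three conditions \eqref{conditii-matriceale}: condition $(i)$, $U^T\nabla G(U)=\nabla G(U)^TU$, becomes $U^TA(U)=A(U)^TU$; condition $(ii)$, $\nabla G(U)=UU^T\nabla G(U)$, becomes $A(U)=UU^TA(U)$; and condition $(iii)$, $U^TU=\mathbb{I}_p$, is exactly the standing hypothesis $U\in St^n_p$. By Theorem~\ref{second-egregium} these three conditions together are necessary and sufficient for $U$ to be a critical point of $\widetilde{G}=G_{|_{St^n_p}}$; hence, for $U\in St^n_p$, being a critical point is equivalent to the two displayed conditions $(i)$ and $(ii)$, which is the assertion.

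\textbf{Where the difficulty lies.} There is essentially no obstacle: the proof is a verbatim reading of Theorem~\ref{second-egregium}. The single line worth care is Step~1 — using that each $A_i$ is symmetric to turn $A_i+A_i^T$ into $A_i$ (up to scaling) — after which any overall scalar is irrelevant, since conditions $(i)$ and $(ii)$ are homogeneous of degree one in $\nabla G(U)$.
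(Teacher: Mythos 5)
Your proof is correct and is essentially the paper's own argument: compute $\nabla G(U)=\left[A_1{\bf u}_1,\dots,A_p{\bf u}_p\right]$ (up to the harmless factor $2$ coming from $\nabla({\bf u}_i^TA_i{\bf u}_i)=2A_i{\bf u}_i$, irrelevant since the conditions are homogeneous in $\nabla G(U)$) and substitute into Theorem \ref{second-egregium}. Nothing further is needed.
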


The above necessary and sufficient conditions are the same conditions discovered in \cite{bolla} (eq. (3.3) and (3.4) from the proof of the Theorem 3.1).

A particular case of the cost function $\sum\limits_{i=1}^p{\bf u}_i^TA_i{\bf u}_i$ is when $A_i=\mu_i A$, where $0\leq \mu_1\leq ...\leq \mu_p$ and $A$ is a $n\times n$ symmetric matrix. Thus, we obtain the Brockett cost function
$$G_B=\sum\limits_{i=1}^p\mu_i{\bf u}_i^TA{\bf u}_i.$$
For this cost function the conditions $(i)$ and $(ii)$ of Theorem \ref{egregium} become:
\begin{itemize}
\item [(i)] $(\mu_b-\mu_c){\bf u}_b^TA{\bf u}_c=0,\,\,\forall b,c\in \{1,...,p\},\,\,\,b\neq c$;
\item [(ii)] $\mu_aA{\bf u}_a\in \hbox{Span}\{{\bf u}_1,...,{\bf u}_p\},\,\,\forall a\in \{1,...,p\}$.

\end{itemize}
Depending on the parameters $\mu_1,...,\mu_p$ the above two conditions can be further explained.

I.  If the parameters $\mu_1,...,\mu_p$ are pairwise distinct and strictly positive, then $U\in St^n_p$ is a critical point of the Brockett cost function if and only if every column vector of the matrix $U$ is an eigenvector of the matrix $A$.

II. For the case when among the strictly positive parameters $\mu_1,...,\mu_p$ we have multiplicity, the set of critical points becomes larger. More precisely, we have $$0<\mu_1=...=\mu_{s_1}<\mu_{s_1+1}=...=\mu_{s_1+s_2}<\dots<\mu_{s_1+...+s_{q-1}+1}=...=\mu_{s_1+...+s_{q}},$$ where $s_1\geq 1,...,s_q\geq 1$ and $s_1+...+s_q=p$. We denote by $J_1=\{1,...,s_1\}$, ...,$J_q=\{s_1+s_2+...+s_{q-1}+1,...,p\}$. By an elementary computation, the matrix $U\in St^n_p$ is a critical point of the Brockett cost function if and only if $\text{Span}\{{\bf u}_k\,|\,k\in J_{l}\}$ is an invariant subspace of $A$ for all $l\in \{1,...,q\}$.

We illustrate the above results on a simple case of a Brockett cost function defined on $St_2^3$. Assume that the matrix $A$ is in diagonal form with distinct entries. If $0<\mu_1<\mu_2$, then we are in the case I and $U=[{\bf u}_1,{\bf u}_2]$ is a critical point for the Brockett cost function if and only if ${\bf u}_1=\pm {\bf e}_i$ and ${\bf u}_2=\pm {\bf e}_j$ with $i,j\in \{1,2,3\}$ and $i\neq j$. If $0<\mu_1=\mu_2$, then we are in the case II and  $U=[{\bf u}_1,{\bf u}_2]$ is a critical point for the Brockett cost function if and only if the set $\{{\bf u}_1,{\bf u}_2\}$ is an orthonormal frame of any coordinate plane $\text{Span}\{{\bf e}_i,{\bf e}_j\}$ with $i\neq j$.

\section{Steepest descent algorithm}

Let $(S,{\bf g}_{_S})$ be a smooth Riemannian manifold and $\wt{G}:S\rightarrow \R$ be a smooth cost function. The iterative scheme of steepest descent  is given by
\begin{equation}
x_{k+1}={\mathcal{R}}_{x_k}(-\lambda_k\nabla_{{\bf g}_{_S}}\wt{G}(x_k)),
\end{equation}
where ${\mathcal{R}}:TS\rightarrow S$ is a smooth retraction, notion introduced in \cite{shub} (see also \cite{adler}), and $\lambda_k\in\R$ is a scalar called step length. For the case when the manifold $S$ is the preimage of a regular value of a set of constraint functions we have that $\nabla_{{\bf g}_{_S}}\wt{G}(x_k)=\partial G(x_k)$, where $G$ is an extension to the ambient space $M$ of the cost function $\wt{G}$, $\partial G$ is the embedded gradient vector field introduced in \cite{Birtea-Comanescu-Hessian}, and ${\bf g}_S$ is the induced Riemannian metric on $S$ by the ambient space $M$.  The vector $\partial G(x_k)$ is written in the coordinates of the ambient space $M$, but it belongs to the tangent space $T_{x_k}S$ viewed as a subspace of $T_{x_k}M$.

If the manifold $S$ is locally diffeomorphic with a manifold $N$ via a local diffeomorphism $f$ and we know a retraction $\mathcal{R}^N$ for the manifold $N$, then 
$$\mathcal{R}^{f}=f\circ \mathcal{R}^N\circ \left(Df\right)^{-1}$$
is a retraction for the manifold $S$.
A particular case of the above construction is when we replace the local diffeomorphism with local charts. For the case of an orthogonal Stiefel manifold we will use the local charts $\varphi_U$ defined by \eqref{charts}. More precisely, the retraction induced by a local chart $\varphi_U$ is given by
\begin{equation}\label{retracta}
\mathcal {R}^{\varphi_{_U}}(\Omega U)=\mathcal{C}\left(\frac{1}{2}\Omega\right)U,
\end{equation}
where $\Omega\in W_{I_p}$.
\medskip

{\bf Steepest descent algorithm on orthogonal Stiefel manifolds}:
\begin{framed}
\begin{itemize}

\item [1.] For a $n\times p$ matrix $U$ construct the vector ${\bf u}:=\text{vec}(U)=\left({\bf u}_1^T, ...,{\bf u}_p^T\right)\in \R^{np}$.

\item [2.] Consider a smooth prolongation $G:\R^{np}\rightarrow \R$ of the cost function  $\wt{G}:St^n_p\rightarrow \R$.

\item [3.] Compute $\nabla G({\bf u})$ and construct the $n\times p$ matrix $\nabla G(U)=\text{vec}^{-1}(\nabla G({\bf u}))$.

\item [4.] Compute the Lagrange multiplier functions 
\begin{equation*}
\left.\begin{array}{l}
\sigma_{aa}({\bf u})=  \left<\displaystyle\frac{\partial G}{\partial {\bf u}_a}({\bf u}),{\bf u}_a\right>,\,\,
\sigma_{bc}({\bf u})=\displaystyle\frac{1}{2}\left(\left<\displaystyle\frac{\partial G}{\partial {\bf u}_c}({\bf u}),{\bf u}_b\right>+\left<\displaystyle\frac{\partial G}{\partial {\bf u}_b}({\bf u}),{\bf u}_c\right>\right).\end{array}\right.
\end{equation*}

\item [5.] Construct the symmetric $p\times p$ matrix $\Sigma(U)=\left[\sigma_{bc}({\bf u})\right]$.

\item [6.]  Compute the $n\times p$ matrix $\partial G(U)=\nabla G(U)-U\Sigma(U)$. 

\item [7.] Input ${U}_0\in St^n_p$ and $k=0$.

\item [8.] {\bf repeat}

$\bullet$ Compute the $n\times p$ matrix $\partial G(U_k)$.

$\bullet$ Determine a set $I_p(U_k)$ containing the indexes of  the rows that form a full rank submatrix of $U_k$.

$\bullet$ Construct a generic $n\times n$ skew-symmetric matrix $\Omega_k=[\omega_{ij}]$ in 
$$W_{I_p(U_k)}=\left\{\Omega=\left[\omega_{ij}\right]\in \text{Skew}_{n\times n}(\R)\,\left|\, \omega_{ij}=0\,\,\text{for all}\,\,i\notin I_p(U_k),\, j\notin I_p(U_k) \right.\right\}.$$

$\bullet$ Choose a length step $\lambda_k\in \R$ and solve the matrix equation of $np-\frac{p(p+1)}{2}$ variables $\omega_{ij}$
$$-\lambda_k\partial G(U_k)=\Omega_kU_k.$$

$\bullet$ Using the solution $\Omega_k$ of the above equation, compute $$U_{k+1}=\left(\mathbb{I}_n+\frac{1}{2}\Omega_k\right)\left(\mathbb{I}_n-\frac{1}{2}\Omega_k\right)^{-1}U_k.$$

{\bf until} $U_{k+1}$ sufficiently minimizes $\wt{G}$.

\end{itemize}
\end{framed}

The matrix equation
\begin{equation}\label{sistem-omega}
-\lambda_k\partial G(U_k)=\Omega_kU_k
\end{equation}
has a unique solution since $-\lambda_k\partial G(U_k)\in T_{U_k}St^n_p$  and this tangent vector is uniquely written as $-\lambda_k\partial G(U_k)=\omega_{i'j'}\Lambda_{i'j'}U_k+\omega_{i''j''}\Lambda_{i''j''}U_k$, see Proposition \ref{Baza-2}.

\medskip 

Next we will describe a method for finding the explicit solution of the equation \eqref{sistem-omega}.
Once we have computed $U_k$, we choose a set of indexes $I_p(U_k)=\{i_1,...,i_p\}$ that give a full rank submatrix of $U_k$ (the set of indexes $I_p(U_k)$ is not in general unique). We consider a permutation $\nu_k:\{1,...,n\}\rightarrow \{1,...,n\}$ such that $\nu_k(i_1)=1,...,\nu_k(i_p)=p$
and we introduce the permutation matrix 
\begin{equation}
\label{permutare}
P_{\nu_k^{-1}}=\left[\begin{array}{c}
{\bf e}_{\nu_k^{-1}(1)} \\
\vdots \\
{\bf e}_{\nu_k^{-1}(n)}
\end{array}
\right]\in\mathcal{M}_{n\times n}(\R).
\end{equation}
We make the following notations 
$$\widetilde{U}_k:=P_{\nu_k^{-1}}\cdot U_k\,\,\,\text{and}\,\,\,
\widetilde{\partial G}(U_k):=P_{\nu_k^{-1}}\cdot \partial G(U_k).$$
The matrix $\widetilde{U}_k\in St^n_p$ and it has the form
$$\widetilde{U}_k=\left[
\begin{array}{c}
	\widebar{U}_k \\
	\widebar{\widebar{U}}_k
\end{array}
\right],$$
where $\bar{U}_k \in\mathcal{M}_{p\times p}(\R)$ is an invertible matrix. According to Theorem \ref{spatiutangent}, we have that the tangent vectors in $T_{\widetilde{U}_k}St^n_p$ are of the form $\widetilde{\Omega}_k\widetilde{U}_k$, where
$$\widetilde{\Omega}_k=\left[
\begin{array}{cc}
\widebar{\Omega}_k & \widebar{\widebar{\Omega}}_k \\
-\widebar{\widebar{\Omega}}_k^T & \mathbb{O}
\end{array}
\right].$$
The equation $-\lambda_k \widetilde{\partial G}(U_k)=\widetilde{\Omega}_k\widetilde{U}_k$ has the equivalent form
\begin{equation*}
-\lambda_k
\left[
\begin{array}{c}
\widebar{\partial G}(U_k) \\
\widebar{\widebar{\partial G}}(U_k)
\end{array}
\right]=
\left[
\begin{array}{cc}
\widebar{\Omega}_k & \widebar{\widebar{\Omega}}_k \\
-\widebar{\widebar{\Omega}}_k^T & \mathbb{O}
\end{array}
\right]
\left[
\begin{array}{c}
\widebar{U}_k \\
\widebar{\widebar{U}}_k
\end{array}
\right],
\end{equation*}
where we have denoted
$$\widetilde{\partial G}(U_k):=\left[
\begin{array}{c}
	\widebar{\partial G}(U_k) \\
	\widebar{\widebar{\partial G}}(U_k)
\end{array}
\right].$$
By a straightforward computation, the above system has the solution
\begin{equation}
\begin{cases}
\widebar{\Omega}_k=-\lambda_k\left(\widebar{\partial G}(U_k)+\widebar{U}_k^{-T}\widebar{\widebar{\partial G}}(U_k)^T\widebar{\widebar{U}}_k\right)\widebar{U}_k^{-1} \\
\widebar{\widebar{\Omega}}_k=\lambda_k\widebar{U}_k^{-T}\widebar{\widebar{\partial G}}(U_k)^T.
\end{cases}
\end{equation}
Next, we will prove that the skew-symmetric matrix
$$\Omega_k:=P_{\nu_k^{-1}}^T\cdot \widetilde{\Omega}_k\cdot
P_{\nu_k^{-1}}$$
is the unique solution of equation \eqref{sistem-omega}. Indeed, 
\begin{align*}
\Omega_kU_k&=\left(P_{\nu_k^{-1}}^T\widetilde{\Omega}_kP_{\nu_k^{-1}}\right)P_{\nu_k^{-1}}^T\widetilde{U}_k=P_{\nu_k^{-1}}^T\widetilde{\Omega}_k\widetilde{U}_k\\
&=P_{\nu_k^{-1}}^T\left(-\lambda_k \widetilde{\partial G}(U_k)\right)=-\lambda_kP_{\nu_k^{-1}}^TP_{\nu_k^{-1}}\cdot \partial G(U_k)\\
&=-\lambda_k\partial G(U_k),
\end{align*}
where we have used the property that the permutation matrices are invertible and their inverse is equal with the transpose matrix.

According to \eqref{retracta}, we obtain after $k$ iterations 
\begin{align*}
U_{k+1}&=\left(\mathbb{I}_n+\frac{1}{2}{\Omega}_k\right)\left(\mathbb{I}_n-\frac{1}{2}{\Omega}_k\right)^{-1}{U}_k\\
&=\left(\mathbb{I}_n+\frac{1}{2}P_{\nu_k^{-1}}^T\wt{\Omega}_kP_{\nu_k^{-1}}\right)\left(\mathbb{I}_n-\frac{1}{2}P_{\nu_k^{-1}}^T\wt{\Omega}_kP_{\nu_k^{-1}}\right)^{-1}P_{\nu_k^{-1}}^T\wt{U}_k\\
&=P_{\nu_k^{-1}}^T\left(\mathbb{I}_n+\frac{1}{2}\wt{\Omega}_k\right)\left(\mathbb{I}_n-\frac{1}{2}\wt{\Omega}_k\right)^{-1}\wt{U}_k.
\end{align*}

{\bf The following is an alternative box that describes the steepest descent algorithm on orthogonal Stiefel manifolds:}
\begin{framed}
	\begin{itemize}
		
		\item [1.] For a $n\times p$ matrix $U$ construct the vector ${\bf u}:=\text{vec}(U)=\left({\bf u}_1^T, ...,{\bf u}_p^T\right)\in \R^{np}$.
		
		\item [2.] Consider a smooth prolongation $G:\R^{np}\rightarrow \R$ of the cost function  $\wt{G}:St^n_p\rightarrow \R$.
		
		\item [3.] Compute $\nabla G({\bf u})$ and construct the $n\times p$ matrix $\nabla G(U)=\text{vec}^{-1}(\nabla G({\bf u}))$.
		
		\item [4.] Compute the Lagrange multiplier functions 
		\begin{equation*}
		\left.\begin{array}{l}
		\sigma_{aa}({\bf u})=  \left<\displaystyle\frac{\partial G}{\partial {\bf u}_a}({\bf u}),{\bf u}_a\right>,\,\,
		\sigma_{bc}({\bf u})=\displaystyle\frac{1}{2}\left(\left<\displaystyle\frac{\partial G}{\partial {\bf u}_c}({\bf u}),{\bf u}_b\right>+\left<\displaystyle\frac{\partial G}{\partial {\bf u}_b}({\bf u}),{\bf u}_c\right>\right).\end{array}\right.
		\end{equation*}
		
		\item [5.] Construct the symmetric $p\times p$ matrix $\Sigma(U)=\left[\sigma_{bc}({\bf u})\right]$.
		
		\item [6.]  Compute the $n\times p$ matrix $\partial G(U)=\nabla G(U)-U\Sigma(U)$. 
		
		\item [7.] Input ${U}_0\in St^n_p$ and $k=0$.
		
		\item [8.] {\bf repeat}
		
		$\bullet$ Compute the $n\times p$ matrix $\partial G(U_k)$.
		
		$\bullet$ Determine a set $I_p(U_k)$ containing the indexes of the rows that form a full rank submatrix of $U_k$. Construct the permutation matrix $P_{\nu_k^{-1}}$ using formula \eqref{permutare}.
		
		$\bullet$ Compute $\widetilde{U}_k=P_{\nu_k^{-1}}\cdot U_k$ and
		$\widetilde{\partial G}(U_k)=P_{\nu_k^{-1}}\cdot \partial G(U_k)$. Write $\widetilde{U}_k$ and $\widetilde{\partial G}(U_k)$ in the block matrix form 
		$\left[
		\begin{array}{c}
			\widebar{U}_k \\
			\widebar{\widebar{U}}_k
		\end{array}
		\right]$, and respectively $\left[
		\begin{array}{c}
			\widebar{\partial G}(U_k) \\
			\widebar{\widebar{\partial G}}(U_k)
		\end{array}
		\right]$.
		
		$\bullet$ Choose a length step $\lambda_k\in \R$ and compute \begin{equation*}
		\begin{cases}
		\widebar{\Omega}_k=-\lambda_k\left(\widebar{\partial G}(U_k)+\widebar{U}_k^{-T}\widebar{\widebar{\partial G}}(U_k)^T\widebar{\widebar{U}}_k\right)\widebar{U}_k^{-1} \\
		\widebar{\widebar{\Omega}}_k=\lambda_k\widebar{U}_k^{-T}\widebar{\widebar{\partial G}}(U_k)^T.
		\end{cases}
		\end{equation*}
		
		$\bullet$ Form the matrix $\widetilde{\Omega}_k=\left[
		\begin{array}{cc}
		\widebar{\Omega}_k & \widebar{\widebar{\Omega}}_k \\
		-\widebar{\widebar{\Omega}}_k^T & \mathbb{O}
		\end{array}
		\right]$.
		
		$\bullet$ Compute $$U_{k+1}=P_{\nu_k^{-1}}^T\left(\mathbb{I}_n+\frac{1}{2}\wt{\Omega}_k\right)\left(\mathbb{I}_n-\frac{1}{2}\wt{\Omega}_k\right)^{-1}\wt{U}_k$$
		
		{\bf until} $U_{k+1}$ sufficiently minimizes $\wt{G}$.
		
	\end{itemize}
\end{framed}

An intrinsic way to construct an update for the steepest descent algorithm is to use a geodesic-like update. Using $QR$-decomposition, this has been constructed in \cite{edelman}. A quasi-geodesic update has been introduced in \cite{nishimori} and \cite{wen} for computational efficiency. An interesting retraction and its associated quasi-geodesic curves have been constructed in \cite{krakowski} in relation to interpolation problems on Stiefel manifolds.

An extrinsic method to update the algorithm is using projection-like retraction. For computational reasons various projection-like retraction updates have been constructed in \cite{absil-mahony-sepulchre-1}, \cite{manton}, \cite{bo}.
\medskip

{\bf Brockett cost function case.} For the case I, we consider the following particular cost function on $St^4_2$
$$G(U)=\mu_1 {\bf u}_1^TA{\bf u}_1+\mu_2 {\bf u}_2^TA{\bf u}_2,$$
where $\mu_1=1$, $\mu_2=2$, and $A=\text{diag}\,(1,2,3,4)$. The cost function being quadratic it is invariant under the sign change of the vectors that give the columns of the matrix $U$, but it is not invariant under the order of these column vectors. The set of critical points is given by:
\begin{itemize}[leftmargin=0.5cm]
\item four critical points generated by $[{\bf e}_2,{\bf e_1}]$ (i.e., $[{\bf e}_2,{\bf e_1}]$, $[-{\bf e}_2,{\bf e_1}]$, $[{\bf e}_2,-{\bf e_1}]$, and $[-{\bf e}_2,-{\bf e_1}]$) with the value of the cost function equals 4, which is a global minimum.
\item eight critical points generated by $[{\bf e}_1,{\bf e_2}]$ and $[{\bf e}_3,{\bf e_1}]$ with the value of the cost function equals 5.
\item four critical points generated by $[{\bf e}_4,{\bf e_1}]$ with the value of the cost function equals 6.
\item eight critical points generated by $[{\bf e}_1,{\bf e_3}]$ and $[{\bf e}_3,{\bf e_2}]$ with the value of the cost function equals 7.
\item eight critical points generated by $[{\bf e}_2,{\bf e_3}]$ and $[{\bf e}_4,{\bf e_2}]$ with the value of the cost function equals 8.
\item four critical points generated by $[{\bf e}_1,{\bf e_4}]$ with the value of the cost function equals 9.
\item eight critical points generated by $[{\bf e}_2,{\bf e_4}]$ and $[{\bf e}_4,{\bf e_3}]$ with the value of the cost function equals 10.
\item four critical points generated by $[{\bf e}_3,{\bf e_4}]$ with the value of the cost function equals 11, which is a global maximum.
\end{itemize}

For the case I, we have run the algorithm for some initial points and we show the convergence of the sequence of iterations toward the corresponding critical points.
\smallskip

\small{\begin{tabular}{|c|c|c|c|}
\hline 
& & & \\
$\boldsymbol{U_0}$ & $\boldsymbol{U_{300}}$ & \pbox{5cm}{
 $\boldsymbol{\simeq} \boldsymbol{U_{cr}}$ \\ ({\bf critical point})}  & $\boldsymbol{G(U_{cr})}$ \\
\hline & & & \\
$\left[\begin{array}{cc}
0&\displaystyle\frac{\sqrt{2}}{2}\\
-\displaystyle\frac{\sqrt{2}}{2}&0\\
0&-\displaystyle\frac{\sqrt{2}}{2}\\
-\displaystyle\frac{\sqrt{2}}{2}&0 
\end{array}\right]$ & $\left[\begin{array}{cc}
0&1.0000000\\
-1.0000000&0\\
0&-7.8365183\cdot 10^{-171}\\
-6.1260222\cdot 10^{-166}&0 
\end{array}\right]$ & 
$[-{\bf e}_2,{\bf e}_1]$ & 4 \\
 & & & \\
 \hline & & & \\
$\left[\begin{array}{cc}
0&\displaystyle\frac{\sqrt{3}}{3}\\[6pt]
-\displaystyle\frac{\sqrt{2}}{2}&\displaystyle\frac{\sqrt{3}}{3}\\[6pt]
0&0\\[6pt]
-\displaystyle\frac{\sqrt{2}}{2}&-\displaystyle\frac{\sqrt{3}}{3} 
\end{array}\right]$ & $\left[\begin{array}{cc}
-0.00021656&-0.99999998\\
-1.0000000&0.00021656\\
0&0\\
-1.8444858\cdot 10^{-10}&3.6515800\cdot 10^{-14}  
\end{array}\right]$ & 
$[-{\bf e}_2,-{\bf e}_1]$ &  4 \\
 & & & \\
 \hline & & & \\
$\left[\begin{array}{cc}
\displaystyle\frac{\sqrt{3}}{3}&-\displaystyle\frac{\sqrt{2}}{2}\\[8pt]
0&0\\[8pt]
-\displaystyle\frac{\sqrt{3}}{3}&-\displaystyle\frac{\sqrt{2}}{2}\\[8pt]
\displaystyle\frac{\sqrt{3}}{3}&0
\end{array}\right]$ & $\left[\begin{array}{cc}
-1.4227613\cdot 10^{-13}&-1.0000000\\
0&0\\
-1.0000000&1.4227614\cdot 10^{-13}\\
-1.7746315\cdot 10^{-14}&-1.9382582\cdot 10^{-14} 
\end{array}\right]$ & 
$[-{\bf e}_3,-{\bf e}_1]$ & 5 \\
 & & & \\
\hline
\end{tabular}}
\medskip

For the case II, when $\mu_1=\mu_2=1$ and the same matrix $A$, we obtain continuous families of critical points.\\
Starting from the initial point
$$U_0=\left[\begin{array}{cc}
\displaystyle\frac{1}{2}&0\\[8pt]
\displaystyle\frac{1}{2}&-\displaystyle\frac{\sqrt{2}}{2}\\[8pt]
-\displaystyle\frac{1}{2}&0\\[8pt]
-\displaystyle\frac{1}{2}&-\displaystyle\frac{\sqrt{2}}{2} 
\end{array}\right]
$$
the algorithm goes after 300 iterations to
$${\small U_{300}=\left[\begin{array}{cc}
0.97862435&0.20565599\\
0.20565598&-0.97862437\\
8.8491189\cdot 10^{-11}&-4.4716231\cdot 10^{-11}\\
9.2851360\cdot 10^{-12}&-4.6922465\cdot 10^{-12} 
\end{array}\right] \simeq \left[\begin{array}{cc}
0.9786243&0.2056559\\
0.2056559&-0.9786243\\
0&0\\
0&0 
\end{array}\right],}
$$
which is a rotation of the frame $\{{\bf e}_1,{\bf e}_2\}$ with an angle $\theta \simeq 0.207$ radians and the value of the cost function equals 3, which is a global minimum.\vspace{0.5cm}

\noindent {\bf Acknowledgment.} This work was supported by a grant of Ministery of Research and Innovation, CNCS - UEFISCDI, project number PN-III-P4-ID-PCE-2016-0165, within PNCDI III.


\begin{thebibliography}{99}


\bibitem{absil-mahony-sepulchre-1} {\bf P.A. Absil, R. Mahony, R. Sepulchre}, {\it Optimization Algorithms on Matrix Manifolds}, Princeton University Press, 2008.
\bibitem{adler}{\bf R.L. Adler, J.-P. Dedieu, J.Y. Margulies, M. Martens, M. Shub}, {\it Newton’s method
on Riemannian manifolds and a geometric model for the human spine}, IMA J. Numer. Anal., Vol. 22 (2002), pp. 359-390.
\bibitem{balogh}{\bf J. Balogh, T. Csendes, T. Rapcs\'{a}k}, {\it Some Global Optimization Problems on Stiefel Manifolds}, Journal of Global Optimization, Vol. 30, Issue 1 (2004), pp. 91-101.
\bibitem{birtea-comanescu} {\bf P. Birtea, D. Com\u anescu}, {\it Geometric dissipation for dynamical systems}, Comm. Math. Phys., Vol. 316, Issue 2 (2012), pp. 375-394.
\bibitem{Birtea-Comanescu-Hessian}{\bf  P. Birtea, D. Com\u anescu}, {\it Hessian Operators on Constraint Manifolds}, J. Nonlinear Science, Vol. 25, Issue 6 (2015), pp. 1285-1305.
\bibitem{birtea-comanescu-5-electron} {\bf P. Birtea, D. Com\u anescu}, {\it Newton Algorithm on Constraint Manifolds and the 5-Electron Thomson Problem}, J. Optim. Theor. Appl.,  Vol. 173, Issue 2 (2017), pp. 563-583.
\bibitem{bo} {\bf Bo Jiang, Yu-Hong Dai}, {\it A framework of constraint preserving update schemes for optimization on Stiefel manifold}, Math. Program., Ser. A, Vol. 153, Issue 2 (2015), pp. 535-575.
\bibitem{bolla}{\bf M. Bolla, G. Michaletzky, G. Tusn\'{a}dy, M. Ziermann}, {\it Extrema of Sums of Heterogeneous Quadratic Forms}, Linear Algebra and its Applications, Vol. 269, Issues 1–-3 (1998), pp. 331-365.
\bibitem{boyd} {\bf S. Boyd, N. Parikh, E. Chu, B. Peleato, J. Eckstein}, {\it Distributed Optimization and Statistical Learning via the Alternating Direction Method of Multipliers}, Foundations and Trends in Machine Learning, Vol. 3, Issue 1 (2010), pp. 1-122.
\bibitem{chen} {\bf Caihua Chen, Bingsheng He, Yinyu Ye, Xiaoming Yuan}, {\it The direct extension of ADMM for multi-block convex
minimization problems is not necessarily convergent}, Math. Program., Ser. A, Vol. 155, Issues 1-2 (2016), pp. 57-79.
\bibitem{chu} {\bf M.T. Chu, N.T. Trendafilov}, {\it The orthogonally constrained regression revisited},  J. Comput. and Graphical Statistics, Vol. 10, Issue 4 (2001), pp. 746-771. 
\bibitem{edelman}{\bf A. Edelman, T. A. Arias, S. T. Smith},{\it The
geometry of algorithms with orthogonality constraints}, SIAM J. Matrix Anal. Appl., Vol. 20, Issue 2 (1998), pp. 303-353.
\bibitem{elden}{\bf L. Eld\'{e}n, H. Park}, {\it A Procrustes problem on the Stiefel manifold}, Numer. Math., Vol. 82 (1999), pp. 599-619.
\bibitem{fraikin} {\bf C. Fraikin, K. H\"uper, P. Van Dooren}, {\it Optimization over the Stiefel
manifold}, Proc. in Appl. Math. Mech., Vol. 7, Issue 1 (2007).
\bibitem{glowinski}{\bf R. Glowinski}, {\it On alternating direction methods of multipliers: a historical perspective}. In: W. Fitzgibbon,
Y.A. Kuznetsov, P. Neittaanmaki, O. Pironneau (eds.), Modeling, Simulation and Optimization for
Science and Technology, Computational Methods in Applied Sciences, Vol. 34, pp. 59-82, Springer,
Dordrecht (2014).
\bibitem{glowinski-marrocco}{\bf R. Glowinski, A. Marrocco}, {\it Sur l'approximation par \'{e}l\'{e}ments finis d'ordre un, et la
r\'{e}solution, par p\'{e}nalisation-dualit\'{e} d'une classe de probl\`{e}mes de Dirichlet non lin\'{e}aires}, Rev. Fran\c{c}aise Automat. Inf. Rech. Op\'{e}rationnelle, Vol. 9, Issue 2 (1975), pp. 41-76.
\bibitem{kanamori}{\bf T. Kanamori, A. Takeda}, {\it Non-convex Optimization on Stiefel Manifold and Applications to Machine Learning}, Neural Information Processing - 19th International Conference, ICONIP 2012, Doha, Qatar, Proceedings, Part I, pp. 109-116, 2012.
\bibitem{krakowski} {\bf K.A. Krakowski, L. Machado, F.S. Leite, J. Batista}, {\it A modified Casteljau algorithm to solve interpolation
problems on Stiefel manifolds}, Journal of Computational and Applied Mathematics, Vol. 311 (2017), pp. 84-99.
\bibitem{manton}{\bf  J.H. Manton}, {\it Optimization algorithms exploiting unitary constraints}, IEEE Trans. Signal Process., Vol. 50 (2002), pp. 635-650.
\bibitem{nishimori} {\bf Y. Nishimori, S. Akaho}, {\it Learning algorithms utilizing quasi-geodesic flows on the Stiefel manifold}, Neurocomputing, Vol. 67 (2005), pp. 106-135.
\bibitem{petersen-pedersen}{\bf K.B. Petersen, M.S. Pedersen}, {\it The Matrix Cookbook}, 2012. 
\bibitem{rapcsak}{\bf T. Rapcs\'{a}k}, {\it On minimization on Stiefel manifolds}, European Journal of Operational Research, Vol. 143 (2002), pp. 365-376.
\bibitem{rosen}{\bf J.B. Rosen}, {\it The Gradient Projection Method for Nonlinear Programming. Part II. Nonlinear Constraints}, Journal of the Society for Industrial and Applied Mathematics, Vol. 9, Issue 4 (1961), pp. 514-532.
\bibitem{shub}{\bf M. Shub}, {\it Some remarks on dynamical systems and numerical analysis}. In: Dynamical Systems and
Partial Differential Equations (Caracas, 1984), pp. 69-91. Univ. Simon Bolivar, Caracas (1986).
\bibitem{wen} {\bf Z. Wen, W. Yin}, {\it A feasible method for optimization with orthogonality constraints}, Math. Program., Ser. A, Vol. 142, Issue 1 (2013), pp. 397-434.
\bibitem{zhang}{\bf Y. Zhang}, {\it Recent advances in alternating direction methods: Theory and practice}. In: IPAM Workshop: Numerical Methods for Continuous Optimization. UCLA, Los Angeles (2010).


\end{thebibliography}
\end{document}